\documentclass[a4paper,leqno,10pt]{article}

\usepackage{amsmath,amssymb,amsthm,
verbatim,
}

\usepackage{tikz}
\usetikzlibrary{shapes.geometric,calc,decorations.markings,math}

\tikzstyle{nodo}=[circle,draw,fill,inner sep=0pt,minimum size=%
1.5mm]
\tikzstyle{infinito}=[circle,inner sep=0pt,minimum size=0mm]

\DeclareMathOperator{\sech}{sech}
\newcommand\R{{\mathbb R}}
\newcommand\EE{{\mathcal E}_p}
\newcommand\QQ{Q_p}

\newcommand\EEsei{{\mathcal E}_6}
\newcommand\Hmu{{H_\mu^1}}
\newcommand\Z{{\mathbb Z}}
\newcommand\mup{{\mu_p}}

\newcommand\f{\frac}
\newcommand\dx{{\,dx}}

\newcommand\dy{{\,dy}}

\newcommand\muR{\mu_6^\R}
\newcommand\KR{K_6^\R}

\newcommand\G{\mathcal G}

\newcommand\be{\begin{equation}}
\newcommand\ee{\end{equation}}

\newcommand\eps{\varepsilon}

\newtheorem{theorem}{Theorem}[section]
\newtheorem{proposition}[theorem]{Proposition}
\newtheorem{lemma}[theorem]{Lemma}
\newtheorem{corollary}[theorem]{Corollary}

\theoremstyle{remark}
\newtheorem{remark}[theorem]{Remark}
\newtheorem*{remark*}{Remark}

\theoremstyle{definition}
\newtheorem{definition}[theorem]{Definition}

\date{}



\newcommand{\zz}{{\mathbb Z}}



\theoremstyle{plain}
\newtheorem{thm}{Theorem}[section]

\theoremstyle{definition}

\theoremstyle{definition}

\theoremstyle{remark}

\tikzset{every loop/.style={min distance=10mm,in=300,out=240,looseness=10}}

\tikzset{place/.style={circle,thick,draw=blue!75,fill=blue!20,minimum
size=6mm}}
\tikzset{place2/.style={circle,thick,draw=red!75,fill=red!20,minimum
size=6mm}}

\date{}

\title{Dimensional
crossover with a continuum of critical exponents for  NLS on doubly periodic metric graphs}

\author{Riccardo Adami, Simone Dovetta, Enrico Serra, Paolo Tilli \\ \ \\{\small Dipartimento di Scienze
Matematiche ``G.L. Lagrange'', Politecnico di Torino } \\ {\small
Corso Duca degli Abruzzi, 24, 10129 Torino, Italy}}

\begin{document}

\maketitle

\begin{abstract}
We investigate the existence of ground states for the focusing nonlinear Schr\"odinger equation on a prototypical doubly periodic metric graph. When the nonlinearity power is below $4$, ground states exist for every value of the mass, while, for every nonlinearity power between $4$ (included) and $6$ (excluded), a mark of $L^2$-criticality arises, as
ground states exist if and only if the mass exceeds a threshold value that depends on the power. This phenomenon can be interpreted as a continuous transition from a two-dimensional regime, for which the only critical power is $4$, to a one-dimensional behavior, in which criticality corresponds to the power $6$. We show that such a dimensional crossover is rooted in the coexistence of one-dimensional and two-dimensional Sobolev inequalities, leading to a new family of Gagliardo-Nirenberg inequalities that account for this continuum of critical exponents.
\end{abstract}

\section{Introduction}
Since the first appearance of branched
structures in the modelization of organic molecules (\cite{rs53}), through
the development of the mathematical theory of quantum graphs
(\cite{kuchment,post}), networks (or {metric graphs}) have
provided a general and flexible tool to describe dynamics in complex
structures like systems of quantum wires, Josephson junctions,
propagation of signals through waveguides, and some related technologies. Pioneering
 studies about nonlinear systems on metric graphs appeared in \cite{ali1,ali2}, but more
recently the research on such topics has grown rapidly, and  several
results have been achieved
 on propagation of solitary waves (\cite{acfn11,caudrelier,matrasulov-int}) and
on stationary states
(\cite{matrasulov2,cfn15,noja14,pelinovsky,schneider,gnutzmann}).

\noindent
In a series of recent works (\cite{ast2014,ast2015,ast2016}) we investigated
 the problem of existence of {ground states}
for the energy functional associated to the {focusing,
$L^2$-subcritical} and {critical} nonlinear Schr\"odinger (NLS) equation
\begin{equation}
\label{schrod}
i \partial_t u (t) = - u'' (t) - |u(t)|^{p-2} u(t)
\end{equation}
on {finite non-compact metric graphs}, i.e. branched structures with a finite number of vertices and edges, and at least one infinite edge (i.e. a half-line).

 \noindent
 Specifically,
by {\em ground state} on a metric graph $\G$ we mean every
global minimizer of the energy functional
\begin{equation}\label{pre-energy}
E_p (u)  = \ \f 1 2 \int_\G  |u ' |^2 \, dx - \f 1 p
\int_\G |u|^{p} \, dx,
\end{equation}
in the class of $H^1(\G)$ functions with
fixed $L^2$-norm (or mass) $\mu > 0$. The constraint is dynamically  meaningful as the mass, as well as the energy, is conserved by the NLS
flow, and
 the problem of the existence of ground states is particularly relevant in
the physics of Bose-Einstein condensates (see e.g. Section 1 in \cite{parma} and \cite{ast2014,ast2015,ast2016}).

\medskip

\noindent

In this paper we extend the analysis of the existence of ground states
to a prototypical \emph{doubly periodic} metric graph $\G$, particularly relevant in the applications,
for which the techniques developed in
in previous works (where non-compactness was due to one or more unbounded edges)
do not apply:
a two--dimensional infinite grid isometrically embedded in $\R^2$,
with  vertices on the lattice $\Z^2$ and edges of unit length (see Figure~\ref{gridpicture}).

\begin{figure} \label{gridpicture}
\begin{center}
\begin{tikzpicture}[xscale= 0.5,yscale=0.5]
\draw[step=2,thin] (0,0) grid (8,8);
\foreach \x in {0,2,...,8} \foreach \y in {0,2,...,8} \node at (\x,\y) [nodo] {};
\foreach \x in {0,2,...,8}
{\draw[dashed,thin] (\x,8.2)--(\x,9.2) (\x,-0.2)--(\x,-1.2) (-1.2,\x)--(-0.2,\x)  (8.2,\x)--(9.2,\x); }
\end{tikzpicture}
\end{center}
\caption{The grid  $\G$.}
\end{figure}

It appears, roughly speaking, that {macroscopically} the grid $\G$ has dimension two, while microscopically it is of dimension one. This peculiarity is absent in
graphs with a finite number of half-lines, where the two-dimensional
scale is lacking, as well as in other two-dimensional structures as
$\zz^2$, where edges are
missing and there is of course no microscopic one-dimensional structure \cite{weinsteinlattice}.
The presence of two scales in $\G$
results in a transition from a one-dimensional to a two-dimensional behavior, that emerges in functional inequalities and influences the existence of ground states.
We shall refer to this phenomenon as to
{\em dimensional crossover}.

Before commenting further on this point, it is convenient
to state our  main results in a precise form. We define, for $\mu >0$,
the mass-constrained set
\begin{equation}
\label{space}
\Hmu(\G) = \left\{u\in H^1(\G)\; : \; \int_\G |u|^2\dx = \mu\right\}
\end{equation}
and the corresponding
``ground-state energy level''
\begin{equation}
  \label{defEE}
\EE(\mu) = \inf_{u\in H^1_\mu(\G)} E_p(u),
\end{equation}
considered as a function
$\EE:(0,+\infty) \to \R \cup \{ -\infty \}$ of the mass $\mu$.
By a ``ground states of mass $\mu$'' we mean a function $u\in \Hmu(\G)$ such that
\[
E_p(u) = \EE(\mu).
\]
When $p\in (2,4)$, ground states exist for every prescribed mass.
 \begin{thm}[Subcritical case] \label{subcritical}
Assume  $2 < p < 4$. Then for every $\mu > 0$ there exists a ground state
of mass $\mu$, and $\EE(\mu)<0$.
 \end{thm}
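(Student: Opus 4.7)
The plan is a concentration-compactness argument adapted to the $\Z^2$ translation symmetry of $\G$, resting on three pillars: (i) $\EE(\mu)<0$ for all $\mu>0$; (ii) $H^1$-boundedness of minimizing sequences; (iii) strict subadditivity of $\EE$. For (i), the natural single-edge bump does not suffice at small $\mu$: continuity at the two endpoint vertices forces such a test function to have a uniform positive kinetic cost that dominates for small masses. I would instead use a two-dimensional cone that exploits the macroscopic planar character of $\G$. Fix a vertex $o$, let $d(\cdot,o)$ denote the piecewise linear extension on edges of the $\ell^1$-lattice distance, and for $R\gg 1$ set
\[
v(x)=c\,\max\!\Bigl(1-\f{d(x,o)}{R},\,0\Bigr).
\]
Summing over the $\sim R^2$ edges inside the diamond $\{d(\cdot,o)\le R\}$ on which the slope of $v$ equals $c/R$, one finds $\|v\|_2^2\asymp c^2R^2$, $\|v\|_p^p\asymp c^pR^2$, $\|v'\|_2^2\asymp c^2$. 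Choosing $c$ so that $\|v\|_2^2=\mu$ gives $E_p(v)\asymp \mu/R^2-C\mu^{p/2}R^{2-p}$, and the negative term prevails as $R\to\infty$ precisely when $p<4$.

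For (ii), the edgewise Sobolev embedding yields the global bound $\|u\|_\infty\le C(\|u'\|_2+\|u\|_2)$ on $\G$, and $\|u\|_p^p\le\|u\|_\infty^{p-2}\|u\|_2^2$ then forces, on $\Hmu(\G)$, the kinetic term to dominate in $E_p$ as soon as $p<4$. Hence $\EE(\mu)>-\infty$ and every minimizing sequence $u_n$ is $H^1$-bounded. Translating $u_n$ by $z_n\in\Z^2$ that (nearly) maximize $\|u_n\|_{L^\infty(B(z_n,1))}$, a Lions-type concentration lemma on $\G$ rules out vanishing: otherwise $u_n\to 0$ in every $L^q$, $q>2$, contradicting $\EE(\mu)<0$. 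Therefore, up to subsequence, $u_n\rightharpoonup u\neq 0$ in $H^1(\G)$ and $u_n\to u$ locally strongly in $L^q$ for every $q\in[2,\infty]$.

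The delicate step is (iii), because $\G$ has no continuous scaling symmetry. I would first prove strict monotonicity of $\mu\mapsto\EE(\mu)/\mu$: for $\mu_1<\mu_2$, let $v_n$ be a minimizing sequence for $\EE(\mu_1)$. Since $E_p(v_n)\to \EE(\mu_1)<0$, the elementary bound $\|v_n\|_p^p\ge p|E_p(v_n)|$ forces $\|v_n\|_p^p\ge p|\EE(\mu_1)|/2$ eventually. The dilation $\lambda v_n$ with $\lambda^2=\mu_2/\mu_1>1$ preserves the constraint and satisfies
\[
E_p(\lambda v_n)=\lambda^p E_p(v_n)+\f{\lambda^2(1-\lambda^{p-2})}{2}\|v_n'\|_2^2\le \lambda^p E_p(v_n),
\]
because $1-\lambda^{p-2}<0$. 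Passing to the limit, $\EE(\mu_2)\le \lambda^p\EE(\mu_1)<\lambda^2\EE(\mu_1)$, hence $\EE(\mu_2)/\mu_2<\EE(\mu_1)/\mu_1$. For any splitting $\mu=\mu_1+\mu_2$ with $\mu_i>0$, combining $\EE(\mu)/\mu<\EE(\mu_i)/\mu_i$ for $i=1,2$ yields the strict subadditivity $\EE(\mu)<\EE(\mu_1)+\EE(\mu_2)$.

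Finally, setting $m=\|u\|_2^2\in(0,\mu]$ and $w_n=u_n-u$, the Brezis-Lieb decomposition for $L^p$ together with the orthogonality identity for the Dirichlet integral under weak convergence give $\EE(\mu)\ge \EE(m)+\EE(\mu-m)$ in the limit. Strict subadditivity excludes $0<m<\mu$, hence $m=\mu$, so $u_n\to u$ strongly in $L^2(\G)$ and, by interpolation with the uniform $L^\infty$ bound, also in $L^p(\G)$. Weak lower semicontinuity of the Dirichlet integral then yields $E_p(u)\le \EE(\mu)$, so equality holds and $u$ is the sought ground state.
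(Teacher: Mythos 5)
Your proof is correct and follows the same two-step architecture as the paper (produce a negative-energy competitor, then run a compactness argument that rules out both vanishing and dichotomy), but both steps are implemented differently. For the negativity of $\EE(\mu)$ you use a compactly supported tent function on the $\ell^1$-ball of radius $R$, while the paper restricts the exponential $\kappa_\eps e^{-\eps(|x|+|y|)}$ to $\G$; under the identification $\eps\sim 1/R$ the two competitors have identical scalings ($\|u'\|_2^2\asymp\mu/R^2$, $\|u\|_p^p\asymp \mu^{p/2}R^{2-p}$), and both correctly exploit the two-dimensional macroscopic structure of the grid --- your preliminary remark that a single-edge bump fails at small mass is exactly the right point. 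For compactness, you prove strict subadditivity of $\EE$ via the strict monotonicity of $\mu\mapsto\EE(\mu)/\mu$ and then apply Brezis--Lieb, whereas the paper's dichotomy lemma (Lemma~\ref{propgen}) performs the same mass-rescaling comparison directly on the pieces $u_n-u$ and $u$ of the minimizing sequence, comparing both to $\EE(\mu)$ at the single mass $\mu$ and never invoking $\EE$ at the intermediate masses. The two arguments are essentially the same computation packaged differently; the paper's version has the advantage of requiring only $\EE(\mu)>-\infty$ rather than negativity of $\EE$ at the intermediate masses, which is what allows the same lemma to be reused later for $p\in(4,6)$ at $\mu=\mu_p$, where the ground-state level is zero, while your subadditivity route genuinely needs the negativity established in your step (i). Two harmless imprecisions: the kinetic domination in your step (ii) actually holds for all $p<6$ (not just $p<4$), via $\|u\|_p^p\le\|u\|_\infty^{p-2}\mu\le \mu^{(p+2)/4}\|u'\|_2^{(p-2)/2}$; and in the final Brezis--Lieb step one should rescale $u_n-u$ to mass exactly $\mu-m$ before comparing its energy with $\EE(\mu-m)$, a standard $o(1)$ correction that your $H^1$-bound makes routine.
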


The picture changes as the exponent of the nonlinearity  increases.

 \begin{thm}[Dimensional crossover] \label{crossover}
For every $p \in [4,6]$ there
exists a critical mass $\mu_p > 0$ such that
\begin{enumerate}
\item [(i)] If $p\in (4,6)$ then ground states of mass $\mu$ exist if and
only if $\mu \geq \mu_p$, and
\begin{equation}
\label{inf46}
\EE(\mu)\,\,
\begin{cases} \,\,= 0 &\text{ if } \mu \le \mup \\[0.2em]
\,\,<0 &\text{ if } \mu > \mup.
\end{cases}
\end{equation}
\item [(ii)]  If $p=4$ then ground states of mass $\mu$ exist if $\mu >
\mu_4$, whereas they do not exist if $\mu < \mu_4$. Moreover \eqref{inf46}
is valid also when $p=4$.
 \item [(iii)] If $p = 6$ then there are no ground states,
regardless of the value of $\mu$, and
\begin{equation}
\label{inf6}
\EEsei(\mu)\,\,
\begin{cases}  \,\,= 0 &\text{ if } \mu \le \mu_6 \\[0.2em]
\,\,= -\infty &\text{ if } \mu > \mu_6.\end{cases}
\end{equation}
\end{enumerate}
\end{thm}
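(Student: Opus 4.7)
The plan is to reduce everything to two ingredients: a dimensional-crossover Gagliardo--Nirenberg inequality on $\G$ and a concentration-compactness argument adapted to the $\Z^2$ translation invariance of the grid. For every $p\in[4,6]$ I would prove
\begin{equation}\label{plan-gn}
\|u\|_{L^p(\G)}^{p}\leq K_p\,\|u\|_{L^2(\G)}^{p-2}\,\|u'\|_{L^2(\G)}^{2},\qquad u\in H^1(\G),
\end{equation}
with a sharp constant $K_p>0$. The exponents are tuned so that $\|u'\|_{L^2}^{2}$ matches the Dirichlet term of $E_p$ and the mass alone rules criticality; \eqref{plan-gn} interpolates between the 1D Sobolev embedding on single edges and a macroscopic 2D Sobolev embedding for $\G$, the vertex continuity at $\Z^2$ nodes preventing the usual 1D scaling obstruction that would otherwise ruin it. Setting
\[
\mup:=\bigl(p/(2K_p)\bigr)^{2/(p-2)},
\]
one obtains, on $\Hmu(\G)$,
\[
E_p(u)\geq\tfrac12\bigl(1-(\mu/\mup)^{(p-2)/2}\bigr)\,\|u'\|_{L^2(\G)}^{2}.
\]

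The sign of this bracket yields the three regimes. If $\mu\leq\mup$, $\EE(\mu)\geq 0$, and $E_p(u)=0$ forces $u$ to be constant on the non-compact $\G$, hence $u\equiv 0$; no ground state exists below the threshold. That $\EE(\mu)=0$ is reached in the limit follows from a 2D-spreading sequence $u_R$ essentially equal to $\sqrt\mu/R$ on a grid-ball of radius $R$ with a thin transition annulus, so that $\|u_R'\|_{L^2}^{2}\sim\mu/R$ and $\|u_R\|_{L^p}^{p}\sim\mu^{p/2}R^{2-p}$ both vanish as $R\to\infty$, the Dirichlet term dominating and $E_p(u_R)\to 0^+$. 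If $\mu>\mup$, take a near-optimizer $v$ of \eqref{plan-gn} with $\|v\|_{L^2}^{2}=\mup$ and rescale to $u=\alpha v$, $\alpha=\sqrt{\mu/\mup}>1$: the mass of $u$ is $\mu$ and
\[
E_p(u)\simeq\tfrac{\alpha^2}{2}\|v'\|_{L^2}^{2}(1-\alpha^{p-2})<0,
\]
so $\EE(\mu)<0$.

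The main obstacle is existence of a minimizer when $\EE(\mu)<0$, since $\G$ is non-compact in two independent directions and minimizing sequences may escape to infinity or split. I would argue by concentration-compactness: given a minimizing sequence $\{u_n\}\subset\Hmu(\G)$, translate each $u_n$ by an element of $\Z^2$ to maximize the $L^2$-mass on a fixed ball, extract a weak $H^1$-limit $u^*$, and apply Brezis--Lieb:
\[
\|u_n\|_{L^2}^{2}=\|u^*\|_{L^2}^{2}+\|u_n-u^*\|_{L^2}^{2}+o(1),\quad E_p(u_n)=E_p(u^*)+E_p(u_n-u^*)+o(1).
\]
Vanishing ($u^*=0$) contradicts $\EE(\mu)<0$ through \eqref{plan-gn}. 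Dichotomy, with $\|u^*\|_{L^2}^{2}=\nu\in(0,\mu)$, is ruled out by strict sub-additivity $\EE(\mu)<\EE(\nu)+\EE(\mu-\nu)$ which, since $\EE\equiv 0$ on $(0,\mup]$, reduces to the strict monotonicity $\EE(\mu)<\EE(\mu-\nu)$ when $\mu-\nu>\mup$; this follows from the dilation $w\mapsto\sqrt{\mu/(\mu-\nu)}\,w$, which raises the mass of any $w$ with $E_p(w)<0$ to $\mu$ and strictly lowers the energy. The threshold $\mu=\mup$ in case (i) is handled either as a limit $\mu\to\mup^+$ or by identifying the ground state with a sharp GN optimizer of \eqref{plan-gn}.

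The boundary cases need small adaptations. At $p=4$, \eqref{plan-gn} is the 2D-critical GN: the concentration-compactness above delivers a ground state for every $\mu>\mu_4$, while non-existence for $\mu<\mu_4$ follows from the strict positivity of $E_4$ already noted. At $p=6$, \eqref{plan-gn} becomes 1D-critical and the edge-level scaling becomes lethal: for $u\in H^1_0([0,1])\subset H^1(\G)$ supported on a single edge, the $L^2$-invariant map $u_\lambda(x)=\sqrt\lambda\,u(\lambda x)$ with $\lambda>1$ shrinks the support inside the edge, still vanishes at the two endpoints, and satisfies $E_6(u_\lambda)=\lambda^2 E_6(u)$. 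For $\mu>\mu_6$ one produces such a $u$ with mass $\mu$ and $E_6(u)<0$ (e.g.\ by truncating a 1D soliton along a long chain of edges and then $L^2$-rescaling it onto a single edge), so $E_6(u_\lambda)\to-\infty$ and $\EEsei(\mu)=-\infty$. Combined with the strict positivity of $E_6$ below the threshold, this precludes ground states for every $\mu$.
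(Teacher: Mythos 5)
Your overall skeleton --- the interpolated Gagliardo--Nirenberg inequality with sharp constant $K_p$, the definition $\mup=(p/(2K_p))^{2/(p-2)}$, the lower bound $E_p(u)\ge\frac12\|u'\|_2^2(1-(\mu/\mup)^{(p-2)/2})$, the spreading/near-optimizer test functions for the sign of $\EE(\mu)$, and a Brezis--Lieb dichotomy argument after $\Z^2$-translation --- is exactly the paper's strategy, and those parts are sound. But two steps that carry real content are missing, and they are precisely the points where the theorem's fine structure (the difference between (i) and (ii), and the whole of (iii)) is decided.

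First, existence at the critical mass $\mu=\mup$ for $p\in(4,6)$. Here $\EE(\mup)=0$, so the ``negative energy level'' compactness mechanism is unavailable, and both of your suggested fixes beg the question: taking ground states $u_\mu$ with $\mu\downarrow\mup$ gives a minimizing sequence with energies tending to $0$, which may perfectly well spread out and converge weakly to $0$; and ``identifying the ground state with a sharp GN optimizer'' requires proving that the supremum defining $K_p$ is attained, which is the same compactness problem. The paper resolves this by bounding the quotient $Q_p(u_n)=\|u_n\|_p^p/(\mup^{(p-2)/2}\|u_n'\|_2^2)$ along a maximizing sequence in two independent ways: the one-dimensional GN inequality yields $Q_p(u_n)\le C\|u_n'\|_2^{-(6-p)/2}$, forcing $\|u_n'\|_2\le C$ (compactness), while interpolating $\|u_n\|_p^p\le\|u_n\|_\infty^{p-4}\|u_n\|_4^4$ and using the $p=4$ inequality yields $Q_p(u_n)\le C\|u_n\|_\infty^{p-4}$, forcing $\|u_n\|_\infty\ge C^{-1}$ (non-degeneracy). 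This is what prevents vanishing at zero energy level, and it genuinely uses $4<p<6$ (both exponents $6-p$ and $p-4$ must be positive); nothing in your outline replaces it.

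Second, the case $p=6$. Your construction of functions with $E_6<0$ supported on a single edge (truncated solitons, then $L^2$-invariant compression) produces negative energy only for $\mu$ above the \emph{real line's} critical mass $\muR=\pi\sqrt3/2$, whereas the claim is $\EEsei(\mu)=-\infty$ for all $\mu>\mu_6$ with $\mu_6=(3/K_6)^{1/2}$ defined through the \emph{grid's} sharp constant. Without proving $K_6=\KR$ (equivalently $\mu_6=\muR$) you cannot close the interval between the two thresholds; the paper does this by a symmetric-rearrangement comparison with $\R$, using that on the grid every level set of a nonnegative $H^1$ function has at least two points, to get $K_6\le\KR$ and $\EEsei(\mu)\ge\EEsei^\R(\mu)$. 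Moreover, non-existence of a ground state \emph{at} $\mu=\mu_6$ does not follow from ``strict positivity below the threshold'': at $\mu=\mu_6$ the GN lower bound degenerates to $E_6(u)\ge0$ and a zero-energy minimizer is not excluded by sign considerations alone. The paper rules it out by a rigidity argument: equality in the rearrangement inequality forces the minimizer to be supported in a single edge, which is incompatible with its rearrangement being a soliton.
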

We point out that, when $p=4$, the existence of groud states of mass $\mu=\mu_4$
is still an open problem.
For the sake of completeness, we also mention that when $p > 6$ one has
$\EE(\mu)\equiv-\infty$ for every $\mu$,
as one can easily see by a scaling argument.

In order to interpret Theorems \ref{subcritical} and \ref{crossover},
let us recall that
in $\R^d$, 
for the minimization of the NLS energy 
under a mass constraint,
there exists a {\em critical exponent} $p_d^*$ such that
\begin{enumerate}
\item if $p < p_d^*$, for every mass $\mu > 0$ the ground-state energy level is finite and negative, and is attained by a ground state;
\item if $p > p_d^*$, for every mass $\mu > 0$ the ground-state energy level equals $-\infty$.
\end{enumerate}
It is well-known (\cite{cazenave}) that  $p_d^* = \frac 4 d + 2$ for the NLS in $\R^d$, yielding $p_1^* = 6$ for $\R$ and $p_2^*=4$ for $\R^2$. Furthermore, it has been proved in  \cite{ast2015,ast2016} that for finite non-compact graphs (i.e. graphs with finitely many edges,
at least one of them being unbounded)
the critical exponent is $6$, exactly as for $\R$. Thus the exponents considered in Theorem \ref{subcritical} are subcritical both  in  dimension one and two, which reflects into the typical subcritical flavor of the result.

In fact, the main novelty of the paper emerges in Theorem \ref{crossover} and lies in the ``splitting'' of the critical exponent $p_d^*$ induced by the twofold nature (one/two dimensional) of the grid. Indeed, on the grid $\G$:
\begin{enumerate}
\item $p = 4$ is the supremum of those exponents $p$ such
that $\EE(\mu)$ is finite and negative (and attained by a ground state) for every $\mu >0$;
\item $p = 6$ is the infimum of those exponents $p$ such
that  $\EE(\mu)=-\infty$ for every $\mu > 0$.
\end{enumerate}
Besides, let us stress another remarkable aspect of the dimensional crossover. In $\R^d$, as well as on non-compact finite graphs, the critical exponent is characterized by the existence of a {\em critical mass} in the following sense: for smaller masses every function has positive energy, while for larger masses there are functions with negative energy (as already mentioned,  on a non-compact finite graph such a critical mass arises only when $p=6$).

On the contrary, on the grid $\G$ a similar notion of critical mass (the number $\mu_p$ in Theorem \ref{crossover}) arises for every $p \in [4,6]$, so that, in this respect, {\em every exponent within this range  
is, in fact, critical} (see Remark~\ref{remgn}).
Beyond this critical mass, however, the  energy is still bounded from below and a ground state exists, as if the problem had kept trace of the subcriticality
of the exponent $p<6$ at the microscopic scale.

From the point of view of functional analysis, the dimensional crossover is  due to the
simultaneous validity, for every function $u\in W^{1,1}(\G)$, of the two inequalities
\begin{equation}
  \label{duedisug1}
  \Vert u\Vert_{L^{\infty}(\G)}\leq \Vert u'\Vert_{L^1(\G)},\qquad
  \Vert u\Vert_{L^{2}(\G)}\leq  \Vert u'\Vert_{L^1(\G)}.
\end{equation}
Of these, the former is typical of dimension one, on the model
of the well known inequality
\begin{equation}
\label{sobolevR}
\|v\|_{L^\infty(\R)} \leq \frac 1 2 \|v'\|_{L^1(\R)}\qquad
\forall v\in W^{1,1}(\R),
\end{equation}
while the latter is the formal analogue of the Sobolev inequality in $\R^2$
\begin{equation*}
\|v\|_{L^2(\R^2)} \leq C\, \|\nabla v\|_{L^1(\R^2)}\qquad
\forall v\in W^{1,1}(\R^2),
\end{equation*}
and is typical of dimension two. As discussed in Section~\ref{sec:inequalities},
either inequality in \eqref{duedisug1}
entails a particular version of the Gagliardo-Nirenberg inequality in $H^1(\G)$
(\eqref{gn1} and \eqref{gn2} respectively).
By interpolation, one obtains the  \emph{critical}
Gagliardo-Nirenberg inequalities
\begin{equation}
\label{GNinterpolated}
\int_\G |u|^p \dx \ \leq \, K_p \, \left(\int_\G |u|^2\dx\right)^\frac{p-2}2 \int_\G |u'|^2 \dx ,\qquad \forall u \in H^1(\G)
\end{equation}
which, being  valid for \emph{every} exponent $p\in [4,6]$,
give rise
to a continuum of critical exponents (see also Remark~\ref{remgn}).
Indeed, using \eqref{GNinterpolated}, the NLS energy  in \eqref{pre-energy}
can be
estimated from below as
\begin{equation} \nonumber
E_p (u) \geq  \frac12 \left( 1 - \f {2 K_p} p  \mu^\frac{p- 2}2 \right)
 \int_\G |u'|^2 \, dx
\end{equation}
which shows that $E_p(u) \ge 0$ for every $u\in \Hmu(\G)$, as soon as
\[
\mu \le \left( \frac p {2 K_p} \right)^{\frac 2 {p -2}}.
\]
The number in the right-hand--side of this inequality is the critical mass $\mu_p$ of Theorem \ref{crossover}.
\medskip

Finally we would like to point out that we have chosen the grid $\G$ to illustrate our results because it is the simplest doubly periodic metric graph, on which computations and proofs are particularly transparent. It should be clear however that many other doubly periodic graphs can be treated with the methods developed in the present work. Among these, we explicitly mention the hexagonal grid, a model for {\em graphene}. 

At the core of the results stands the double periodicity of the graph, that is responsible for the occurrence of phenomena such as the dimensional crossover. To exploit the double periodicity on a concrete given graph one might of course have to alter some parts of the proofs presented in this paper to adapt them to the particular features of the graph under study. These modifications are by no means substantial, being of a technical nature. We plan to illustrate this with the detailed study of some other particular graphs, significatively relevant for the applications, in forthcoming papers.

\section{Inequalities} \label{sec:inequalities}

In this section  we establish some fundamental inequalities for functions  on the grid.

For notational purposes, it is convenient  to
describe the grid $\G$ as isometrically
embedded
in $\R^2$,
with the lattice $\Z^2$ as set of vertices,
and an edge of length one joining every pair of adjacent vertices.
In this way, it is natural
to interpret $\G$ as the union of horizontal lines $\{H_j\}$  and
vertical lines $\{V_k\}$, which cross at every vertex $(k,j)\in\Z^2$.

As on any metric graph, to deal with the energy functional \eqref{pre-energy},
the natural functional framework
is given by the standard spaces
$L^p(\G)$ and $H^1(\G)$.
With the notation for $\G$ introduced above, for the $L^p$ norms we have
\begin{equation}
\label{notationnorm}
\begin{split}
\|u\|_{L^p(\G)}^p&=\sum_{j\in\mathbb{Z}} \|u \|^p_{L^p (H_j)}
+\sum_{k \in\mathbb{Z}}\| u\|_{L^p({V_k})}^p \\
& = \sum_{j\in\mathbb{Z}}\int_{H_j}|u(x)|^pdx+\sum_{k
\in\mathbb{Z}}\int_{V_k}|u(x)|^p dx<\infty
\end{split}
\end{equation}
and
\begin{equation}\label{normaLinf}
\|u\|_{L^\infty(\G)}=
\sup_{j,k}\left\{\|u\|_{L^\infty(H_j)},\|u\|_{L^\infty(V_k)}\right\},
\end{equation}
while
\begin{equation*}
\| u \|_{H^1 (\G)}^2 \ = \ \| u \|_{L^2 (\G)}^2 + \| u
' \|_{L^2 (\G)}^2.
\end{equation*}
Here, as usual, $H^1 (\G)$ denotes the space of functions on $\G$
whose
restriction to every horizontal  and vertical line
belongs to $H^1 (\R)$, and that, in addition, are continuous at
every vertex of $\G$.
In Theorem \ref{thm:sobolev2} we shall also need the space $W^{1,1} (\G)$,
similarly defined as the space of functions on $\G$
whose
restriction to every horizontal  and vertical line
belongs to $W^{1,1}(\R)$ and that, in addition, are continuous at
every vertex. 

\begin{remark*}
In the following, symbols like $\|u\|_p$ stand for $\|u\|_{L^p(\G)}$. When the
domain of integration is different from $\G$, it will always be indicated in the norm.
\end{remark*}

First we recall the standard Gagliardo-Nirenberg inequality, which
(up to a multiplicative constant $C>1$ on the right-hand side)
is valid
on any noncompact metric graph (a proof  in the general framework can be
found in  \cite{ast2016}). Here, for the sake of completeness, we shall
give a short proof tailored to the grid $\G$ which, by the way, yields
a slightly sharper estimate.
\begin{theorem} [One-dimensional Gagliardo-Nirenberg inequality]
\label{thgn1}
For every $p\in[2,\infty)$ one has
\begin{equation}
\label{gn1}
\|u\|_{p}\leq
\|u\|_{2}^{\frac{1}2+\frac 1{p}}\|u'\|_{2}^{\frac{1}2-\frac 1 p}\qquad \forall u \in H^1(\G)
\end{equation}
and, moreover,
\begin{equation}
\label{gninfty}
\|u\|_{\infty} \le 
 \|u\|_{2}^{\frac 1 2}\|u'\|_{2}^{\frac 1 2} \qquad \forall u \in H^1(\G).
\end{equation}
\end{theorem}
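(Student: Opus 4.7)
The plan is to establish \eqref{gninfty} first, and then derive \eqref{gn1} from it by a simple interpolation argument.

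For \eqref{gninfty}, I would start from the one-dimensional identity on $\R$: for any $v\in H^1(\R)$ and any $x\in\R$,
\[
|v(x)|^2 = 2\int_{-\infty}^{x} v\,v'\,dt = -2\int_x^{+\infty} v\,v'\,dt.
\]
Taking the minimum of the absolute values of these two expressions (equivalently, averaging) and applying Cauchy--Schwarz yields the sharp bound $\|v\|_{L^\infty(\R)}^2\le \|v\|_{L^2(\R)}\|v'\|_{L^2(\R)}$, with the constant $1$ (rather than $2$) thanks to the splitting from both sides. I would then apply this pointwise estimate to $u$ restricted to each horizontal line $H_j$ and each vertical line $V_k$, noting that $u|_{H_j}, u|_{V_k}\in H^1(\R)$ by definition of $H^1(\G)$, and that
\[
\|u|_{H_j}\|_{L^2(H_j)} \le \|u\|_2, \qquad \|(u|_{H_j})'\|_{L^2(H_j)} \le \|u'\|_2,
\]
and similarly for the vertical lines. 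Taking the supremum over $j$ and $k$ in accordance with \eqref{normaLinf} gives \eqref{gninfty}.

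For \eqref{gn1}, the idea is to interpolate between $L^2$ and $L^\infty$. For $p\ge 2$ one has
\[
\|u\|_p^p = \int_\G |u|^{p-2}|u|^2\,dx \le \|u\|_\infty^{p-2}\,\|u\|_2^2,
\]
and substituting \eqref{gninfty} gives
\[
\|u\|_p^p \le \|u\|_2^{(p-2)/2}\,\|u'\|_2^{(p-2)/2}\,\|u\|_2^2 = \|u\|_2^{(p+2)/2}\,\|u'\|_2^{(p-2)/2}.
\]
Taking $p$-th roots produces exactly the exponents $\tfrac12+\tfrac1p$ and $\tfrac12-\tfrac1p$ in \eqref{gn1}.

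I do not expect any serious obstacle here: everything reduces to the one-dimensional $H^1(\R)$ estimate line by line, and the $L^\infty$ norm on $\G$ is \emph{defined} as the supremum over all horizontal and vertical lines, so the reduction is immediate. The only subtle point is getting the constant $1$ (rather than $\sqrt{2}$) in \eqref{gninfty}, which requires using both tails of the fundamental theorem of calculus at each point rather than only one; continuity at the vertices plays no role in this particular argument, since the estimate is derived separately on each line.
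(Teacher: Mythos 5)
Your proposal is correct and follows essentially the same route as the paper: the authors also reduce \eqref{gn1} to \eqref{gninfty} via $\|u\|_p\le\|u\|_\infty^{1-2/p}\|u\|_2^{2/p}$, and obtain \eqref{gninfty} line by line, the only cosmetic difference being that they get the constant $1$ by applying the sharp $L^1$ Sobolev inequality \eqref{sobolevR} to $v=u^2$ on each line, which is the same two-sided fundamental-theorem-of-calculus argument you describe.
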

\begin{proof}
  Since $\| u\|_p\leq \|u\|_\infty^{1-\frac 2 p}\|u\|_2^{\frac 2 p}$, it suffices to
  prove \eqref{gninfty}. On the other hand, given $u\in H^1(\G)$, we have
  $u^2\in W^{1,1}(H_j)$    for every horizontal line $H_j$
  of $\G$. Then, applying \eqref{sobolevR} with $v=u^2$ on $H_j$ yields
  \[
  \| u\|_{L^\infty(H_j)}^2 \leq \int_{H_j} |u(x)u'(x)|\,dx
  \leq
  \|u\|_{L^2(H_j)}\|u'\|_{L^2(H_j)}\leq
  \|u\|_{L^2(\G)}\|u'\|_{L^2(\G)}.
    \]
Since clearly this inequality remains true if we replace $H_j$ with
any vertical line $V_k$, \eqref{gninfty} follows immediately from \eqref{normaLinf}.
\end{proof}

As already mentioned,
inequalities like \eqref{gn1} and \eqref{gninfty} hold for every noncompact graph.
On the contrary the next inequality, and its consequences below, rely on the two-dimensional web structure of the grid $\G$.
\begin{theorem}[Two-dimensional Sobolev inequality]
\label{thm:sobolev2}
For every $u\in W^{1,1}(\G)$,
\begin{equation}
\label{sobolev2}
\|u\|_2\leq \frac 1 2 \|u'\|_1.
\end{equation}
\end{theorem}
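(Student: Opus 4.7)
The plan is to imitate, on the grid, the classical proof of the two-dimensional Sobolev inequality on $\R^2$, where one estimates $|v(x,y)|^2$ by the product of two one-dimensional total variations, one in each coordinate, and then factorizes and integrates. On $\G$ a generic point lies on only one line, but every vertex $(k,j)\in\Z^2$ lies on two, and this is precisely the foothold that enables a similar argument. Concretely, I would use the decomposition
\[
\|u\|_2^2=\sum_{j\in\Z}\|u\|_{L^2(H_j)}^2+\sum_{k\in\Z}\|u\|_{L^2(V_k)}^2
\]
from \eqref{notationnorm}, and bound each line contribution via the elementary interpolation $\|u\|_{L^2(H_j)}^2\le\|u\|_{L^\infty(H_j)}\|u\|_{L^1(H_j)}$. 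The $L^\infty$-factor is immediate from \eqref{sobolevR} applied to $u$ on $H_j$, giving $\|u\|_{L^\infty(H_j)}\le\frac12\|u'\|_{L^1(H_j)}$. Controlling the $L^1$-factor is the delicate point, since a generic $W^{1,1}(\R)$ function need not be integrable.

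To estimate $\|u\|_{L^1(H_j)}$ I would work edge by edge. On the unit edge $[k,k+1]\subset H_j$, averaging the two fundamental-theorem bounds $|u(x,j)|\le|u(k,j)|+\int_k^x|u'(s,j)|\,ds$ and $|u(x,j)|\le|u(k+1,j)|+\int_x^{k+1}|u'(s,j)|\,ds$ and integrating in $x$ yields
\[
\int_k^{k+1}|u(x,j)|\,dx\le\frac12\bigl(|u(k,j)|+|u(k+1,j)|\bigr)+\frac12\int_k^{k+1}|u'(s,j)|\,ds.
\]
Summing in $k\in\Z$, the vertex terms telescope to $\sum_k|u(k,j)|$; invoking continuity at vertices and applying \eqref{sobolevR} to $u$ on each vertical line $V_k$, one has $|u(k,j)|\le\frac12\|u'\|_{L^1(V_k)}$, and therefore
\[
\|u\|_{L^1(H_j)}\le\frac12\|u'\|_{L^1(H_j)}+\frac12\sum_{k\in\Z}\|u'\|_{L^1(V_k)}.
\]
This step---importing vertical variation into an $L^1$-bound on a horizontal line via the vertex values---is the heart of the argument and the principal place where the doubly-periodic structure of $\G$ is used.

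Combining the $L^\infty$- and $L^1$-estimates yields $\|u\|_{L^2(H_j)}^2\le\frac14\|u'\|_{L^1(H_j)}\bigl(\|u'\|_{L^1(H_j)}+\sum_k\|u'\|_{L^1(V_k)}\bigr)$; summing over $j$ and using the elementary inequality $\sum_j a_j^2\le(\sum_j a_j)^2$ for nonnegative sequences gives $\sum_j\|u\|_{L^2(H_j)}^2\le\frac14\bigl(\sum_j\|u'\|_{L^1(H_j)}\bigr)\|u'\|_1$. A symmetric computation with horizontal and vertical roles interchanged produces the analogous bound for $\sum_k\|u\|_{L^2(V_k)}^2$; adding the two and observing that $\sum_j\|u'\|_{L^1(H_j)}+\sum_k\|u'\|_{L^1(V_k)}=\|u'\|_1$ yields $\|u\|_2^2\le\frac14\|u'\|_1^2$, which is \eqref{sobolev2}. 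I expect the main obstacle to be precisely the bookkeeping of constants: the sharp value $\frac12$ in the conclusion requires that every averaging, telescoping, and passage from $\ell^2$ to $\ell^1$ norms contribute exactly the prescribed factor.
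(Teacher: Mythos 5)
Your proof is correct and yields the sharp constant $\tfrac12$. The overall architecture coincides with the paper's: both decompose $\|u\|_2^2$ line by line, bound each line's $L^2$ norm by $\|u\|_{L^\infty}\|u\|_{L^1}$, control the $L^\infty$ factor by \eqref{sobolevR} on that line, reduce everything to the same intermediate estimate $\|u\|_{L^1(H_j)}\le\tfrac12\bigl(\|u'\|_{L^1(H_j)}+\sum_k\|u'\|_{L^1(V_k)}\bigr)$ (the paper proves its mirror image for $\int_{V_k}|u|$, via \eqref{x1} and the summed version of \eqref{eq234}), and finish with the identical quadratic-form summation. Where you genuinely differ is the mechanism producing that $L^1$ bound, which is indeed the heart of the proof. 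The paper builds, for each unit segment $I_j$ of $V_k$, a path $P_j$ isometric to $\R$ (a half-line of $H_j$, the segment $I_j$, a half-line of $H_{j+1}$; see Figure~\ref{pathPj}), applies \eqref{sobolevR} along $P_j$, and uses the essential disjointness of the paths to sum without losses. You instead work edge by edge with two fundamental-theorem bounds, telescope the resulting vertex values, and control each $|u(k,j)|$ by $\tfrac12\|u'\|_{L^1(V_k)}$ via \eqref{sobolevR} on the perpendicular line $V_k$. Both devices extract exactly a factor $\tfrac12$ of the parallel derivative mass plus $\tfrac12$ of the total perpendicular derivative mass, so the constants match at every stage; your version is more elementary (no auxiliary paths, only the fundamental theorem of calculus and continuity at the vertices), at the price of a slightly longer bookkeeping, and the summability of $\sum_k|u(k,j)|$ that it implicitly requires is guaranteed by the bound $\sum_k|u(k,j)|\le\tfrac12\|u'\|_1$ itself.
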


\begin{figure}
\begin{center}
\begin{tikzpicture}[scale=0.6,thin]
\foreach \x in {-2,0,2,6,8,10} \draw[dashed] (\x,-1.2)--(\x,7.2);
\foreach \x in {0,6} \draw[dashed] (-3.2,\x)--(11.2,\x);
  \begin{scope}[very thick]
  \draw (-4,2)  --(4,2) node at (4,2.4)[anchor=south west]{$I_j$} --(4,4)--(12,4);
  \draw[dashed] (-6,2) node[anchor=north west]{$H_{j\phantom{+1}}$} --(-4,2) (12,4)--(14,4)
  node[anchor=south east]{$H_{j+1}$};
  \end{scope}
\draw (-4,4)--(4,4) (4,2)--(12,2);
\draw[dashed] (-6,4)  --(-4,4) (12,2)--(14,2);
\draw (4,2)--(4,-2) (4,4)--(4,8) ;
  \draw[dashed] (4,-2)--(4,-3.5) node[anchor=south west]{$V_k$} (4,8)--(4,9.5);
\end{tikzpicture}
\end{center}
\caption{The path  $P_j$ (thick in the picture).}
\label{pathPj}
\end{figure}
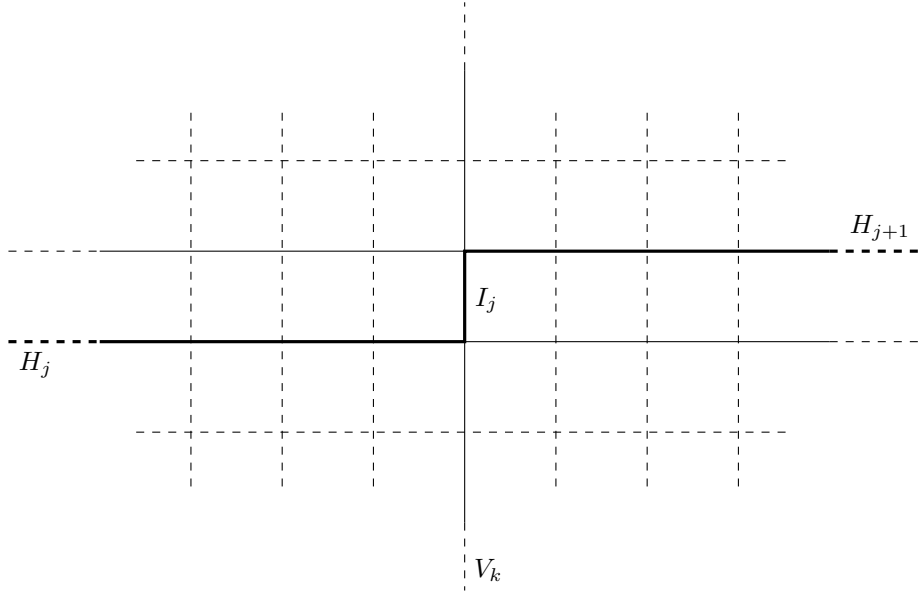

\begin{proof}
Given $u\in W^{1,1}(\G)$, we have
\begin{equation}
\label{l2}
\| u \|_2^2 = \sum_{j \in \Z} \int_{H_j} |
u (x) |^2 \, dx + \sum_{k \in \Z} \int_{V_k} | u (y) |^2 \, dy.
\end{equation}
First observe that, for each $k$, using \eqref{sobolevR} we obtain
\begin{equation}
\label{x1}
\int_{V_k} |u(y) |^2 \dy
\leq
\Vert u\Vert_{L^\infty(V_k)}\int_{V_k} |u(y) | \dy
\leq \frac 1 2
\Vert u'\Vert_{L^1(V_k)}\int_{V_k} |u(y) | \dy.
\end{equation}
Then, for each $j\in\Z$, consider the horizontal lines
$H_j$ and $H_{j+1}$, and denote by $P_j$ the path
in $\G$ obtained by joining together the halfline of $H_j$
to the left of $V_k$, the vertical segment
of $V_k$ between $H_j$ and $H_{j+1}$ (which we denote by $I_j$),
and the halfline of $H_{j+1}$ to the right of $V_k$ (see Figure~\ref{pathPj}).

Since in particular $u\in W^{1,1}(P_j)$, and the metric graph $P_j$
is isometric to $\R$,  we find from   \eqref{sobolevR}
\[
|u(y)|\leq \frac 1 2
\int_{P_j} |u'(x) | \dx\quad\forall y\in I_j
\]
and, since $I_j$ has length one, integrating this inequality over $I_j$
yields
\begin{equation}
\label{eq234}
\int_{I_j} |u(y)|\,dy\leq \frac 1 2
\int_{P_j} |u'(x) | \dx,\quad\forall j\in\Z.
\end{equation}
Now observe that
\[
V_k=\bigcup_{j\in\Z} I_j,\qquad
\bigcup_{j\in\Z} P_j=V_k\cup\bigcup_{j\in\Z} H_j,
\]
and moreover, up to a negligible set, the paths $\{P_j\}$ ($j\in\Z$)
are   mutually disjoint:
therefore, summing \eqref{eq234} over $j\in\Z$ yields
\[
\int_{V_k} |u(y)|\,dy\leq \frac 1 2
\left(
\int_{V_k} |u'(y) | \dy
+
\sum_{j}\int_{H_j} |u'(x) | \dx
\right)
=
\frac 1 2
\left(
v_k
+
\sum_{j} h_j
\right)
\]
having set, for brevity,
$v_k= \int_{V_k} |u'(y) | \dy$
and
$h_j=\int_{H_j} |u'(x) | \dx$. Combining with \eqref{x1},
and summing over $k$,
one obtains
\[
\sum\nolimits_k\int_{V_k} |u(y)|^2\,dy\leq
\frac 1 4
\sum\nolimits_k
v_k\left(
v_k
+
\sum\nolimits_{j} h_j
\right).
\]
Of course, by the symmetry of $\G$, we also have
\[
\sum\nolimits_j\int_{H_j} |u(x)|^2\,dx\leq
\frac 1 4
\sum\nolimits_j
h_j\left(
h_j
+
\sum\nolimits_{k} v_k
\right),
\]
and summing the last two inequalities we find
\[
\Vert u\Vert_{L^2(\G)}^2\leq
\frac 1 4
\left(
\sum\nolimits_k
(h_k^2+v_k^2)
+
2
\sum\nolimits_{j,k} h_j v_k
\right)
\leq
\frac 1 4
\left(
\sum\nolimits_k
h_k+v_k\right)^2
=\frac 1 4\Vert u'\Vert_{L^1(\G)}^2.
\]
\end{proof}

\begin{theorem} [Two-dimensional Gagliardo-Nirenberg inequality]
\label{thgn2}
For every $p\in[2,\infty)$ one has
\begin{equation}
\label{gn2}
\|u\|_{p}\leq C
\|u\|_{2}^{\frac 2 p}\cdot\|u'\|_{2}^{1-\frac 2 p}\qquad \forall u \in H^1(\G),
\end{equation}
where $C$ is an absolute constant.
\end{theorem}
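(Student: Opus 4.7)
The strategy is to apply Theorem~\ref{thm:sobolev2} to suitable powers of $|u|$, combine with Cauchy--Schwarz, and close via standard $L^p$-interpolation. As a preliminary, Theorem~\ref{thgn1} gives $u\in L^2(\G)\cap L^\infty(\G)$, hence $u\in L^q(\G)$ for every $q\in[2,\infty]$, which makes all the intermediate norms finite.

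For the base case $p=4$, I would take $v=u^2\in W^{1,1}(\G)$ and apply \eqref{sobolev2} together with Cauchy--Schwarz:
\[
\|u\|_4^2 \,=\, \|u^2\|_2 \,\leq\, \tfrac12\|(u^2)'\|_1 \,=\, \int_\G |uu'|\,dx \,\leq\, \|u\|_2\|u'\|_2,
\]
which is \eqref{gn2} with $C=1$. For $p\in[2,4]$ the inequality then extends by interpolating between $L^2$ and $L^4$ with weight $\lambda = 4/p-1$.

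For $p>4$ I would apply \eqref{sobolev2} to $v=|u|^{p/2}\in W^{1,1}(\G)$ (legitimate since $p/2\geq 2$ and $u\in L^\infty\cap L^2$). Using $(|u|^{p/2})'=\tfrac p2\,|u|^{p/2-1}\mathrm{sgn}(u)\,u'$ and Cauchy--Schwarz yields
\[
\|u\|_p^{p/2} \,\leq\, \tfrac p4\,\|u\|_{p-2}^{(p-2)/2}\,\|u'\|_2.
\]
Next, I would control $\|u\|_{p-2}$ by interpolating between $L^2$ and $L^p$: $\|u\|_{p-2}\leq\|u\|_2^\theta\|u\|_p^{1-\theta}$ with $\theta = 4/(p-2)^2$, the unique value forced by $1/(p-2)=\theta/2+(1-\theta)/p$. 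Substituting and collecting all $\|u\|_p$-factors on the left gives
\[
\|u\|_p^{p/(p-2)} \,\leq\, \tfrac p4\,\|u\|_2^{2/(p-2)}\,\|u'\|_2,
\]
and raising to the power $(p-2)/p$ produces \eqref{gn2} with $C_p=(p/4)^{(p-2)/p}$.

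The main obstacle is bookkeeping of exponents in the last step: one needs to check that the exponent of $\|u\|_p$ on the right after substitution is strictly less than $p/2$, so that $\|u\|_p$ can be reabsorbed on the left. The identity
$\tfrac{p}{2}-\tfrac{p-2}{2}(1-\theta) = \tfrac{p}{p-2}$, valid precisely for the chosen $\theta=4/(p-2)^2$, is what makes this collapse work and yields exactly the exponents $2/p$ on $\|u\|_2$ and $1-2/p$ on $\|u'\|_2$ demanded by \eqref{gn2}. I would also remark that the constant $C$ in \eqref{gn2} is not genuinely absolute---it depends on $p$ (growing like $p/4$ as $p\to\infty$)---but it is independent of $u$.
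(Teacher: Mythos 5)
Your proof is correct, and it rests on the same core mechanism as the paper's: apply the two-dimensional Sobolev inequality \eqref{sobolev2} to a power of $|u|$, use Cauchy--Schwarz, and absorb the resulting $L^p$-factor by interpolation. The execution differs in direction: the paper proves $\|u\|_{p+2}\leq C\|u\|_p^{p/(p+2)}\|u'\|_2^{2/(p+2)}$ (Sobolev applied to $u^{1+p/2}$) and interpolates $L^p$ between $L^2$ and $L^{p+2}$, whereas you bound $\|u\|_p$ through $\|u\|_{p-2}$ (Sobolev applied to $|u|^{p/2}$) and interpolate $L^{p-2}$ between $L^2$ and $L^p$; your exponent bookkeeping checks out, the absorption is legitimate since $\|u\|_p<\infty$ by \eqref{gn1}, and your separate treatment of $p\in[2,4]$ via the clean base case $\|u\|_4^2\leq\|u\|_2\|u'\|_2$ even gives $C=1$ on that range. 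Your closing caveat about the constant is well taken but harmless: your argument yields $C_p=(p/4)^{(p-2)/p}\sim p/4$, and in fact the paper's own proof, once the factor $C^\theta$ is raised to the power $p/2$ in the final absorption step, produces a constant with at least the same growth in $p$, so neither argument actually delivers a constant that is uniform over all $p\in[2,\infty)$. Since the paper only invokes \eqref{gn2} for $p$ in the bounded range relevant to Corollary~\ref{thgn3} and Theorem~\ref{crossover}, where both constants are uniformly bounded, nothing downstream is affected.
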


\begin{proof}
Given $p\in [2,\infty)$,  we have
\begin{equation}\label{eqinterp}
\Vert u\Vert_p \leq \Vert u\Vert_2^{1-\theta}
\cdot\Vert u\Vert_{p+2}^{\theta}
\end{equation}
where
\begin{equation}\label{eqtheta}
\frac {1-\theta}2+\frac\theta{p+2}=\frac 1 p,\quad\text{i.e.}\quad
\theta=1-\frac 4 {p^2}.
\end{equation}
Now observe that $u\in L^\infty(\G)$ by \eqref{gninfty}, and hence
$u^{1+p/2}$ belongs to $W^{1,1}(\G)$ since $p\geq 2$.
Therefore,  we can
replace $u$
with $u^{1+p/2}$ in
\eqref{sobolev2}, thus obtaining
\[
\Vert u\Vert_{p+2}^{1 + \frac p 2}
\leq
\frac {p+2} 4 \int_\G |u(x)|^{\frac p 2}
|u'(x)|\,dx\leq
\frac {p+2} 4
\Vert u\Vert_p^{\frac p 2}
\cdot\Vert u'\Vert_2.
\]
Raising to the power $2/(p+2)$ we find
\[
\Vert u\Vert_{p+2}
\leq
C
\Vert u\Vert_p^{\frac p {p+2}}
\cdot\Vert u'\Vert_2^{\frac 2 {p+2}}, \qquad C=\sup_{p\geq 2} \left(\frac {p+2} 4\right)^{\frac 2{p+2}}
\]
(one may take e.g. $C=3/2$). Plugging this
inequality into \eqref{eqinterp} gives
\[
\Vert u\Vert_p \leq \Vert u\Vert_2^{1-\theta}
\cdot
C^\theta
\Vert u\Vert_p^{\frac {\theta p} {p+2}}
\cdot\Vert u'\Vert_2^{\frac {2\theta} {p+2}}
\]
and \eqref{gn2} follows using \eqref{eqtheta}, after elementary computations.
\end{proof}

\begin{corollary}[Interdimensional Gagliardo-Nirenberg inequality]
\label{thgn3}
There exists a universal constant $C>0$ such that,
for every $ p\in [2,\infty)$,
\begin{equation}
\label{intgn}
\|u\|_p\leq C
\|u\|_2^{1-\alpha}\|u'\|_2^{\alpha }\qquad \quad
    \forall\alpha\in \left [\frac {p-2}{2p},\frac{p-2}p\right],\qquad\forall u \in H^1(\G).
\end{equation}
In particular,
for every $p\in[4,6]$ there exists a constant $K_p$, depending only on $p$, such that
\begin{equation}
\label{gn3}
\|u\|_p^p\leq K_p
\|u\|_2^{p-2}\|u'\|_2^2\qquad \forall u \in H^1(\G).
\end{equation}
\end{corollary}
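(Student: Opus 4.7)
The plan is to view \eqref{intgn} as a log--convex interpolation between Theorems \ref{thgn1} and \ref{thgn2}, and then \eqref{gn3} as a single specialization of \eqref{intgn}.

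First I would identify the endpoints of the interval $[\tfrac{p-2}{2p},\tfrac{p-2}{p}]$ as precisely the exponents appearing in the two Gagliardo--Nirenberg inequalities already proved. Writing $\alpha_1=\tfrac12-\tfrac1p=\tfrac{p-2}{2p}$ and $\alpha_2=1-\tfrac2p=\tfrac{p-2}{p}$, inequality \eqref{gn1} reads
\[
\|u\|_p\leq \|u\|_2^{1-\alpha_1}\|u'\|_2^{\alpha_1},
\]
while \eqref{gn2} reads
\[
\|u\|_p\leq C\|u\|_2^{1-\alpha_2}\|u'\|_2^{\alpha_2}.
\]
Given any $\alpha$ in the interval, there is a unique $t\in[0,1]$ with $\alpha=(1-t)\alpha_1+t\alpha_2$.

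Next I would combine the two bounds multiplicatively. Since both inequalities estimate the same quantity $\|u\|_p$, I can write $\|u\|_p=\|u\|_p^{1-t}\cdot\|u\|_p^t$ and bound each factor by the corresponding inequality, obtaining
\[
\|u\|_p\leq C^t\,\|u\|_2^{(1-t)(1-\alpha_1)+t(1-\alpha_2)}\|u'\|_2^{(1-t)\alpha_1+t\alpha_2}=C^t\,\|u\|_2^{1-\alpha}\|u'\|_2^{\alpha}.
\]
Bounding $C^t\leq\max(1,C)$ uniformly in $t\in[0,1]$ (and hence uniformly in $p$ and $\alpha$, because the constant $C$ in \eqref{gn2} is absolute) yields \eqref{intgn} with a universal constant.

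Finally, for \eqref{gn3} I would apply \eqref{intgn} with the specific choice $\alpha=\tfrac{2}{p}$. The only thing to check is that this value lies in the admissible interval when $p\in[4,6]$: the inequality $\tfrac{p-2}{2p}\leq\tfrac{2}{p}$ is equivalent to $p\leq 6$, and $\tfrac{2}{p}\leq\tfrac{p-2}{p}$ is equivalent to $p\geq 4$. With this $\alpha$ one gets $\|u\|_p\leq C\|u\|_2^{1-2/p}\|u'\|_2^{2/p}$, and raising to the $p$-th power yields \eqref{gn3} with $K_p=C^p$. No step looks substantial; the only subtlety worth flagging is checking that the constant in \eqref{intgn} can be taken independent of $p$, which is immediate once one notices that the constants from \eqref{gn1} and \eqref{gn2} are themselves absolute.
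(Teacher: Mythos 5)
Your proof is correct and follows essentially the same route as the paper: the paper also observes that the two endpoints of the $\alpha$-interval are exactly \eqref{gn1} and \eqref{gn2} and concludes by convexity of the right-hand side in $\alpha$, which is precisely your multiplicative (geometric-mean) interpolation written out explicitly; the specialization $\alpha=2/p$ with the admissibility check $4\le p\le 6$ is likewise identical.
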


\begin{proof}
Observe that \eqref{intgn} reduces to \eqref{gn1} when $\alpha=\frac {p-2}{2p}$,
while it reduces to \eqref{gn2} when  $\alpha=\frac {p-2}{p}$. Then
\eqref{intgn} is established also for every intermediate value of $\alpha$,
since the right-hand side is a convex function of $\alpha$.

Finally,
when $p\in [4,6]$,  \eqref{gn3} is obtained letting  $\alpha=2/p$ in \eqref{intgn}
(the condition $p\in [4,6]$ guarantees that this choice of $\alpha$ is admissible).
\end{proof}

\begin{remark}\label{remgn} In $\R^d$, when dealing with the
NLS energy
\[
\frac 1 2 \Vert\nabla u\Vert_{L^2(\R^d)}^2-\frac 1 p \Vert u\Vert_{L^p(\R^d)}^p
\]
in presence of an $L^2$ mass constraint, the relevant version
of the
Gagliardo-Nirenberg (G-N) inequality
is
\begin{equation}
\label{gnRd}
\Vert u\Vert_{L^p(\R^d)}\leq
\textsc{c} \Vert u\Vert_{L^2(\R^d)}^{1-\alpha}\Vert \nabla u\Vert_{L^2(\R^d)}^\alpha ,\quad
\alpha=\frac {d(p-2)}{2p},
\end{equation}
valid as soon as $\alpha\in [0,1)$
(see \cite{libroleoni}). When $p=2+4/d$,    this inequality becomes
\emph{critical} for the NLS energy because $\alpha=2/p$ (i.e. the exponents in the inequality
become as in \eqref{gn3}),
and a critical mass $\mu_p$ comes into play.
 Now, while in \eqref{gnRd}
this \emph{critical exponent} $p=2+4/d$ is uniquely determined by the ambient space $\R^d$,
on the grid $\G$ \emph{every} $p\in [4,6]$ is critical for the NLS energy, since
one can let $\alpha=2/p$ in \eqref{intgn} (and obtain \eqref{gn3}) not just
for one particular $p$, but for every $p\in [4,6]$.

Formally, solving for $d$ in \eqref{gnRd}, for fixed $\alpha$ we can interpret \eqref{intgn}
as a G-N inequality in dimension $d=2\alpha p/(p-2)$:
we call \eqref{intgn} \emph{interdimensional} since $d$ ranges over $[1,2]$, as $\alpha$ varies
(this is in contrast with \eqref{gnRd},
where the exponent $\alpha$ is uniquely determined by $p$ and the space dimension $d$).
With this interpretation
\eqref{gn3} (which is just \eqref{intgn} with $\alpha=2/p$)
can be seen as a critical G-N inequality in dimension
$d=4/(p-2)$ so that, formally, every $p\in [4,6]$ can be seen
as the critical exponent $p=2+4/d$, in a fractal scaling dimension $d\in [1,2]$.
\end{remark}

\section{Proof of Theorem \ref{subcritical}}
\label{sec:main}
In this section we prove Theorem \ref{subcritical}.

\begin{remark}
	\label{remark inf finite}
	Note that, for every $\mu>0$ and $p<6$, the one-dimensional Ggaliardo-Nirenberg \eqref{gn1} ensures  that $\EE(\mu)$ is finite and $E_p$ is coercive on $\Hmu(\G)$(\cite{ast2016}).
\end{remark}

Recalling \eqref{space} and \eqref{defEE}, we first prove a dichotomy
lemma for
minimizing sequences, useful to prove the  existence of ground states.

\begin{lemma}[Dichotomy]
\label{propgen}
Given $\mu>0$ and $p\in(2,6)$, 
let $\{u_n\}\subset \Hmu(\G)$ be a minimizing sequence for $E_p$, i.e.
\[
\lim_{n\to\infty} E_p(u_n)=\EE(\mu),
\]
and assume that $u_n\rightharpoonup u$ weakly in $H^1(\G)$ and pointwise a.e. on $\G$.
If
\begin{equation}
\label{defm}
m:=\mu-\Vert u\Vert_2^2\in [0,\mu]
\end{equation}
denotes the loss of mass in the limit,
then either $m=0$ or $m=\mu$.
\end{lemma}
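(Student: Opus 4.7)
The plan is a classical concentration-compactness argument based on the Brezis-Lieb lemma, coupled with strict subadditivity of $\EE$.

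\textit{Brezis-Lieb split.} By Remark~\ref{remark inf finite}, $E_p$ is coercive on $\Hmu(\G)$, so $\{u_n\}$ is bounded in $H^1(\G)$ and hence in $L^\infty(\G)$ by \eqref{gninfty}. Setting $v_n:=u_n-u$, we have $v_n\rightharpoonup 0$ in $H^1$ and $v_n\to 0$ a.e., and standard Brezis-Lieb arguments give
\begin{align*}
\|u_n\|_p^p&=\|u\|_p^p+\|v_n\|_p^p+o(1),\\
\|u_n'\|_2^2&=\|u'\|_2^2+\|v_n'\|_2^2+o(1),\\
\|u_n\|_2^2&=\|u\|_2^2+\|v_n\|_2^2+o(1),
\end{align*}
so $\|v_n\|_2^2\to m$ and $E_p(u_n)=E_p(u)+E_p(v_n)+o(1)$.

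\textit{Splitting lower bound.} Assume for contradiction $0<m<\mu$. Since $u\in H^1_{\mu-m}(\G)\setminus\{0\}$, $E_p(u)\geq\EE(\mu-m)$. For $v_n$, the renormalization $\tilde v_n:=\sqrt{m/\|v_n\|_2^2}\,v_n\in H^1_m(\G)$ satisfies $E_p(\tilde v_n)=E_p(v_n)+o(1)$ (the dilation factor tends to $1$ and $\{v_n\}$ is bounded in $H^1$), so $\liminf E_p(v_n)\geq\EE(m)$. Passing to the limit,
\[
\EE(\mu)\geq\EE(\mu-m)+\EE(m).
\]

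\textit{Strict subadditivity.} The contradiction comes from the strict reverse inequality $\EE(\mu)<\EE(\mu-m)+\EE(m)$, obtained via the scaling identity
\[
E_p(\lambda w)=\lambda^2 E_p(w)+\tfrac{1}{p}(\lambda^2-\lambda^p)\|w\|_p^p,
\]
which for $\lambda>1$, $p>2$ and $w\not\equiv 0$ gives $E_p(\lambda w)<\lambda^2 E_p(w)$. Applied to a near-minimizer $w$ of mass $\max(\mu-m,m)$ dilated by $\lambda=\sqrt{\mu/\max(\mu-m,m)}$, this produces a competitor of mass $\mu$ whose energy strictly undercuts $\EE(\mu-m)+\EE(m)$ as soon as at least one of $\EE(\mu-m),\EE(m)$ is negative. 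The remaining scenario, $\EE(\mu-m)=\EE(m)=0$, is ruled out in two ways: for $p\in(2,4)$, an explicit pyramid-shaped test function on a large square region of the grid establishes $\EE(\mu)<0$ for every $\mu>0$, precluding the scenario altogether; for $p\in[4,6)$, the critical Gagliardo-Nirenberg inequality \eqref{gn3} at strictly subcritical masses gives $E_p(u)\geq c\|u'\|_2^2$ with $c>0$, so the splitting bound $\EE(\mu)\geq 0$ either contradicts $\EE(\mu)<0$ (when $\mu>\mu_p$) or forces $u\equiv 0$ (when $\mu\leq\mu_p$), against $\|u\|_2^2=\mu-m>0$. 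Dovetailing the scaling identity with these regime-specific arguments is the main obstacle of the proof.
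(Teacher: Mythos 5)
Your argument is correct in its overall logic but follows a genuinely different, and considerably heavier, route than the paper's. You run the classical Lions scheme: derive $\EE(\mu)\ge\EE(\mu-m)+\EE(m)$ from the Brezis--Lieb splitting, then contradict it with strict subadditivity. The price is a case analysis on the signs of $\EE(\mu-m)$ and $\EE(m)$, the sub-homogeneity $\EE(\theta\nu)\le\theta\EE(\nu)$ for $\theta\ge1$ (needed to pass from $\lambda^2\EE(\max(\mu-m,m))$ to $\EE(\mu-m)+\EE(m)$, which you leave implicit), and, in the residual case $\EE(\mu-m)=\EE(m)=0$, several regime-specific facts: that $\EE<0$ everywhere for $p<4$, that $\EE(\mu)<0$ for $\mu>\mu_p$, and --- a step you use but never justify --- that $\EE(\mu)\le 0$ for all $\mu$ when $p\in[4,6)$. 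Without this last bound, ``$\EE(\mu)\ge c\|u'\|_2^2$ forces $u\equiv 0$'' does not follow: a strictly positive lower bound on $\EE(\mu)$ is only a contradiction once an upper bound by zero is in hand. All of these ingredients are true and provable without the lemma (so there is no circularity), but in the paper they are established only in Section 4, after the lemma, so as written your proof is not self-contained.

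The paper's proof avoids all of this. Instead of comparing $E_p(u)$ and $E_p(u_n-u)$ to $\EE(\mu-m)$ and $\EE(m)$, it rescales each piece up to mass $\mu$ and compares to $\EE(\mu)$ itself: since $\|u_n-u\|_2^2<\mu$ and $\|u\|_2^2=\mu-m<\mu$, the mass-normalizations give $E_p(u_n-u)\ge\frac{\|u_n-u\|_2^2}{\mu}\,\EE(\mu)$ and $E_p(u)>\frac{\mu-m}{\mu}\,\EE(\mu)$, the latter strictly because $\|u\|_p>0$. Feeding these into the Brezis--Lieb identity yields $\EE(\mu)>\frac{m}{\mu}\EE(\mu)+\frac{\mu-m}{\mu}\EE(\mu)=\EE(\mu)$, a contradiction valid for every $p\in(2,6)$ and every sign of $\EE(\mu)$, using only $\EE(\mu)>-\infty$ from Remark~\ref{remark inf finite}. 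If you keep your structure, you should supply the missing bound $\EE(\mu)\le 0$ (it follows from the family $u_\eps$ used in the proof of Theorem~\ref{subcritical}) and spell out the sub-homogeneity step; otherwise I recommend the self-referential rescaling, which makes the lemma self-contained and uniform in $p$.
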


\begin{proof}
We assume that $0<m<\mu$ and seek a contradiction. According to
the Brezis--Lieb Lemma (\cite{bl}), we can write
\begin{equation}
\label{s20}
E_p(u_n) = E_p(u_n -u) + E_p(u) + o(1)\quad\text{as $n\to\infty$,}
\end{equation}
and, since $u_n\rightharpoonup u$ in $L^2(\G)$,
\begin{equation} \label{massloss}
\Vert u_n-u\Vert_2^2=
\Vert u_n\Vert_2^2+
\Vert u\Vert_2^2
-2 \langle u_n,u\rangle_2
\to \mu-\Vert u\Vert_2^2 =m
\end{equation}
as $n\to\infty$.
Now, for $n$ large enough,
\begin{equation} \nonumber \begin{split}
\EE(\mu) & \le    E_p\left(\frac{\sqrt{\mu}}{\| u_n - u \|_2}\, (u_n - u) \right) \\
 &=   \, \frac12\frac{\mu}{\| u_n - u \|_2^2}\| u_n' - u' \|_2^2 - \frac1p \frac{{\mu}^{p/2}}{\| u_n - u \|_2^p} \| u_n - u\|_p^p \\
& <   \, \frac{\mu}{\| u_n - u \|_2^2 } E_p(u_n - u),
\end{split} \end{equation}
since $ \| u_n - u\|_p \ne 0$ and $\| u_n - u \|_2^2 < \mu$. Thus,
\[
E_p(u_n - u) > \frac{\| u_n - u \|_2^2}\mu\, \EE(\mu),
\]
and by \eqref{massloss}
\[
\liminf_n E_p(u_n-u) \ge
\frac{m}\mu\, \EE(\mu).
\]
Thus, taking the liminf in \eqref{s20} we find
\begin{equation}
\label{part}
\EE(\mu) \ge \frac{m}\mu\, \EE(\mu) +E_p(u).
\end{equation}
Similarly, since  $u\not\equiv 0$ we also have
\begin{equation} \begin{split}
\EE(\mu) \le
\frac 1 2\,\frac\mu {\mu-m}
\Vert u'\Vert_2^2
-\frac 1 p
\left(\frac\mu {\mu-m}\right)^{\frac p 2}
\Vert u\Vert_p^p <  & \,
\frac\mu {\mu-m} E_p(u)
\end{split}
\end{equation}
and, as $\EE(\mu)>-\infty$ by Remark \ref{remark inf finite}, from \eqref{part} we finally obtain
\[
\EE(\mu) >  \frac{m}\mu\, \EE(\mu) + \frac{\mu-m}{\mu}\,\EE(\mu) = \EE(\mu),
\]
a contradiction. 
\end{proof}

\begin{proposition}
\label{att}
Assume that $p < 6$ and that $\EE(\mu)$ is strictly negative. Then
there exists $u\in H^1_\mu(\G)$ such that
\[
E_p(u) = \EE(\mu).
\]
\end{proposition}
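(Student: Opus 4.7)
The plan is a standard concentration--compactness argument based on the dichotomy Lemma~\ref{propgen}. Let $\{u_n\}\subset\Hmu(\G)$ be a minimizing sequence for $E_p$. Since $E_p$ is coercive on $\Hmu(\G)$ by Remark~\ref{remark inf finite}, the sequence is bounded in $H^1(\G)$, and up to a subsequence $u_n\rightharpoonup u$ weakly in $H^1(\G)$ and pointwise a.e.\ on $\G$. Lemma~\ref{propgen} then yields the alternative $m=0$ or $m=\mu$, where $m=\mu-\|u\|_2^2$.

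Suppose first $m=0$, so $u\in\Hmu(\G)$ and $\|u_n-u\|_2\to 0$. Since $\{u_n-u\}$ is bounded in $H^1(\G)$, the one-dimensional Gagliardo--Nirenberg inequality \eqref{gn1} forces $\|u_n-u\|_p\to 0$. Combining this with
\[
E_p(u_n-u)=\tfrac12\|u_n'-u'\|_2^2-\tfrac1p\|u_n-u\|_p^p\geq -\tfrac1p\|u_n-u\|_p^p=o(1)
\]
and with the Brezis--Lieb identity $E_p(u_n)=E_p(u_n-u)+E_p(u)+o(1)$ (as in the proof of Lemma~\ref{propgen}), I would deduce $E_p(u)\leq\EE(\mu)$; since $u\in\Hmu(\G)$, the reverse inequality is automatic, so $u$ is a ground state.

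The main obstacle is ruling out the vanishing alternative $m=\mu$, for which I would exploit the $\Z^2$-invariance of $\G$. Both $E_p$ and the $L^2$-mass are invariant under translations by $(a,b)\in\Z^2$, and $\G$ admits a compact fundamental domain $D_0$ (e.g.\ the two unit edges meeting at the origin). For each $n$, pick $(a_n,b_n)\in\Z^2$ so that $\|u_n\|_{L^\infty(D_0+(a_n,b_n))}\geq\frac12\|u_n\|_\infty$, and let $\tilde u_n$ be the translate of $u_n$ by $-(a_n,b_n)$: this is still a minimizing sequence in $\Hmu(\G)$, and along a subsequence $\tilde u_n\rightharpoonup\tilde u$ weakly in $H^1(\G)$ and pointwise a.e. If $\|u_n\|_\infty\to 0$, the elementary interpolation $\|u_n\|_p^p\leq\|u_n\|_\infty^{p-2}\|u_n\|_2^2=\mu\|u_n\|_\infty^{p-2}\to 0$ would give $\liminf_n E_p(u_n)\geq 0$, contradicting $\EE(\mu)<0$; hence $\liminf_n\|u_n\|_\infty>0$. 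On the (finitely many) edges of $D_0$ the restrictions of $\tilde u_n$ are uniformly Hölder bounded through $H^1([0,1])\hookrightarrow C^{1/2}([0,1])$, so Arzelà--Ascoli upgrades the pointwise convergence to uniform convergence on $D_0$, yielding
\[
\|\tilde u\|_{L^\infty(D_0)}\geq \tfrac12\liminf_n\|u_n\|_\infty>0.
\]
Thus $\tilde u\not\equiv 0$, and applying Lemma~\ref{propgen} to $\{\tilde u_n\}$ excludes total mass loss, bringing us back to the case $m=0$ already treated.
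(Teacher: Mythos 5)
Your proof is correct and follows essentially the same route as the paper's: both rest on the dichotomy Lemma~\ref{propgen}, use the $\Z^2$-periodicity of $\G$ to translate the minimizing sequence so that near-maxima sit in a fixed compact set, rule out vanishing via $\|u_n\|_p^p\le \mu\|u_n\|_\infty^{p-2}$ together with $\EE(\mu)<0$, and upgrade weak convergence to local uniform convergence to see that the limit is nontrivial. The only cosmetic differences are that the paper translates at the outset and derives a contradiction directly from $m=\mu$, whereas you translate only after the dichotomy and then re-apply it, and that your $m=0$ endgame via Brezis--Lieb is an equivalent variant of the paper's weak lower semicontinuity argument with strong $L^p$ convergence.
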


\begin{proof}
Let $\{u_n\} \subset H^1_\mu(\G)$ be a minimizing sequence for $E_p$. Since $p < 6$, Remark \eqref{remark inf finite} yields
that $\EE(\mu)>-\infty$ and $u_n$ is bounded in $H^1(\G)$, and
by translating each $u_n$ (exploiting the periodicity of $\G$) we can also
assume that  $u_n$ attains its $L^\infty$-norm on a compact set ${\mathcal K}\subset\G$ independent of $n$.
Therefore, up to subsequences,  $u_n$ converges weakly in $H^1(\G)$, and strongly in $L^\infty_{\text{loc}}(\G)$, to some function $u \in H^1 (\G)$.
Setting $m := \mu - \|u\|_2^2$, from Lemma \ref{propgen} one sees that either $m=0$ or $m=\mu$. If
$m=\mu$ then $u\equiv 0$,  but in this case $u_n \to 0$ in $L^\infty(\G)$,
since in particular,
$u_n\to u\equiv 0$ uniformly on ${\mathcal K}$.
Therefore we would have
\[
E_p(u_n) \ge -\,\frac{\,1\,}p\|u_n\|_\infty^{p-2} \int_\G |u_n|^2 \dx = -\,\frac{\,\mu\,}p \|u_n\|_\infty^{p-2} \to 0,
\]
contradicting the fact that $\EE(\mu) <0$.

Thus it must be
$m=0$, so that
$u_n\to u$ strongly in $L^2(\G)$ and
therefore $u\in H^1_\mu(\G)$.
Moreover, since $u_n$ is bounded in $L^\infty(\G)$, $u_n\to u$ strongly also in $L^p(\G)$. Then
\[
E_p(u) \le \liminf_n E_p(u_n) = \EE(\mu)
\]
by weak lower semicontinuity, and the proof is complete.
\end{proof}

\begin{remark}
It is interesting to compare Proposition \ref{att} with Theorem 3.3 in \cite{ast2016}. According to that result, in a finite non-compact graph the energy threshold under which the existence of a ground state of a given mass is guaranteed equals the energy of the soliton on $\R$ with the same mass. On the contrary,
on the grid $\G$ the absence of half-lines and the periodicity pushes the energy
threshold up to zero.
This makes some proofs easier,
since finding a function with negative energy is far easier than finding a function whose energy lies below a particular negative number. In fact, this task is immediately accomplished when $p<4$, as we now show.

\end{remark}

\begin{proof}[Proof of Theorem \ref{subcritical}]
In view of Proposition \ref{att}, it suffices to construct a function in $\Hmu(\G)$ with negative energy.
Given $\mu>0$, for $\eps >0$  let
\begin{equation}
\label{nueps}
\kappa_\eps = \left(\frac{\eps\mu}2 \,\,\frac{1-e^{-2\eps}}{1+e^{-2\eps}} \right)^{1/2}
\end{equation}
and
consider the function of two variables
\[
\varphi(x,y)=\kappa_\eps  e^{-\eps (|x|+|y|)}
, \qquad (x,y)\in\R^2.
\]
Now, as described in Section~\ref{sec:inequalities}, we can consider $\G$ isometrically
embedded in $\R^2$, with its vertices
on the lattice $\Z^2$, and we
can define $u:\G\to\R$ as the restriction of $\varphi$ to the grid $\G$.
Observe that, on every horizontal line $H_j$ of $\G$, $u$ takes the form
$\kappa_\eps e^{-\eps (|x|+|j|)}$, and a similar expression holds on vertical lines.
Since for every $\lambda>0$
\[
\int_\R e^{-\lambda\eps |x|}\dx = \frac{2}{\lambda\eps}\qquad\text{and}\qquad \sum_{j\in\Z}e^{-\lambda\eps |j|} = \frac{1+e^{-\lambda\eps}}{1-e^{-\lambda\eps}},
\]
recalling \eqref{nueps} we obtain
\[
\int_\G |u_\eps|^2\dx = 2   \sum_{j\in\Z}\int_{H_j} |u_\eps|^2\dx = 2\kappa_\eps^2  \sum_{j\in\Z}e^{-2\eps |j|}\int_\R e^{-2\eps |x|}\dx  = \mu
\]
and, since $|u_\eps'(x)| = \eps |u_\eps(x)|$,
\[
\int_\G |u_\eps'|^2\dx = \eps^2 \mu.
\]
This shows in particular that $u_\eps \in \Hmu(\G)$.
Similarly,  observing that $\kappa_\eps \sim \eps\sqrt{\mu/2}$ as $\eps\to 0$,
we obtain the expansion
\[
\int_\G |u_\eps|^p\dx =
2   \sum_{j\in\Z}\int_{H_j} |u_\eps|^p\dx
=
2\,\kappa_\eps^p\, \frac{2}{\eps p}\, \frac{1+e^{-\eps p}}{1-e^{-\eps p}}
\sim C\mu^{p/2}\eps^{p-2}\quad\text{as $\eps\to 0$,}
\]
where $C$ depends  only on $p$. Therefore, as $\eps \to 0$,
\begin{equation}
\label{asymptotic}
E_p(u_\eps) \sim \frac12 \eps^2\mu - \,\frac 1 p C\mu^{p/2}\eps^{p-2},
\end{equation}
so that $E_p(u_\eps) <0$ (for $\eps$ small enough) when $p < 4$.
This proves that, when $p<4$,  $\EE(\mu)<0$ for every $\mu>0$. Moreover, since
in particular $p<6$,
Remark \eqref{remark inf finite} guarantees that $\EE(\mu)$ is finite.
The  result then follows from Proposition \ref{att}  .
\end{proof}

\section{Proof of Theorem \ref{crossover}}
\label{main2}
In the following we assume that the constants
$K_p$ in the Gagliardo-Nirenberg
inequality \eqref{gn3} are the smallest possible. In other
words,
for $p\in [4,6]$ we let
\begin{equation}
\label{Kp}
K_p =
\sup_{\overset {u\in H^1(\G)}{u\not\equiv 0}}
\QQ(u),\quad\text{where}\quad
\QQ(u)=\frac{\| u\|_p^p}{\|u\|_2^{p-2}\cdot\|u'\|_2^2}.
\end{equation}

The critical masses $\mu_p$ mentioned in Theorem~\ref{crossover}
are defined in terms of the constants $K_p$, as follows.

\begin{definition}
\label{critmass}
For every $p \in [4, 6]$ we define the  {\em critical mass $\mu_p$} as
the positive solution of
\begin{equation}
\label{critp}
\mu_p = \left(\frac{p}{2K_p}\right)^{\frac2{p-2}}.
\end{equation}
\end{definition}
This definition is natural due to the identity
\begin{equation}
\label{identity}
E_p(u)
=\frac{1}{2}\|u'\|_2^2\left(1-\frac{2}{p}\QQ(u)\mu^{\frac{p-2}2}\right)\quad
\forall u\in\Hmu(\G)
\end{equation}
which, using $\QQ(u)\leq K_p$ and \eqref{critp}, leads to the lower bound
\begin{equation}
\label{lowerb}
E_p(u)
\geq \frac{1}{2}\|u'\|_2^2\left(1-
\left({\mu}/{\mup}\right)^{\frac{p-2}2}
\right),\quad
\forall u\in\Hmu(\G),
\end{equation}
that will be widely used in the sequel.

\begin{remark}
\label{remmur}
On the real line $\R$, when $p=6$ the ground-state level
\begin{equation}
\label{gselR}
\EEsei^\R(\mu)=\inf\left\{ \frac 1 2\|w'\|_{L^2(\R)}^2-\frac 1 6 \|w\|_{L^6(\R)}^6\,\big |\,\,
w\in \Hmu(\R)\right\},\quad\mu>0
\end{equation}
is attained by a ground state if and only if $\mu=\muR$, where the
number
\begin{equation}
\label{critR}
\muR = \frac{\pi\sqrt 3}2
\end{equation}
is the critical mass of the real line (see \cite{ast2017}).
Up to sign and translations, the ground states (of mass $\muR$) are the
soliton $\varphi(x)=\sech(2x/\sqrt 3)^{1/2}$ together with all its
mass-preserving rescalings
$\varphi_\lambda(x)=\sqrt\lambda\varphi(\lambda x)$ ($\lambda>0$).
There holds
\begin{equation}
\label{defEER}
\EEsei^\R(\mu)\,\,\left\{
\begin{aligned}
&=0\ \  & \quad\text{if $\mu\leq \muR$}\\
&=-\infty  & \quad\text{if $\mu<\muR$}
\end{aligned}
\right.
\end{equation}
so that in particular ground states have zero energy. Another related quantity
is the optimal constant in the Gagliardo-Nirenber inequality on $\R$, i.e. the number
\begin{equation}
\label{miur}
\KR = \sup_{w\in H^1(\R) \atop w\not\equiv 0}
\frac{\| w\|_{L^6(\R)}^6}{\|w\|_{L^2(\R)}^4\cdot\|w'\|_{L^2(\R)}^2}=\frac 4 {\pi^2}
\end{equation}
(note that $\muR=(3/\KR)^{1/2}$, which is formally consistent with \eqref{critp}
when $p=6$).
\end{remark}
The following proposition gives  a complete picture of the problem
on the grid $\G$ when $p=6$ and, moreover, provides the exact values of $\mu_6$ and $K_6$.
\begin{proposition}
\label{murmu6}
There hold
$\mu_6 = \muR=\pi\sqrt 3/2$ and  $K_6=\KR=4/\pi^2$. Moreover
there holds $\EEsei(\mu)=\EEsei^\R(\mu)$
for every $\mu>0$, but the infimum
\begin{equation}\label{defEEsei}
\EEsei(\mu)=\inf\left\{ \frac 1 2\|u'\|_{L^2(\G)}^2-\frac 1 6 \|u\|_{L^6(\G)}^6\,\big |\,\,
u\in \Hmu(\G)\right\},\quad\mu>0
\end{equation}
is never attained.
\end{proposition}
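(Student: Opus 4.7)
My plan is to first establish $K_6=\KR$, from which $\mu_6=\muR$ follows from Definition~\ref{critmass} together with the identity $\muR=(3/\KR)^{1/2}$ noted in Remark~\ref{remmur}. Once these constants are identified, the lower bound \eqref{lowerb} and direct constructions of test functions on $\G$ will determine $\EEsei$ and match it with $\EEsei^\R$, and a careful analysis of the equality cases will rule out attainment.

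For the estimate $K_6\leq\KR$, I would apply the Gagliardo--Nirenberg inequality on $\R$ to the restriction of $u$ to each horizontal or vertical line $L$ of $\G$, obtaining $\|u\|_{L^6(L)}^6\leq\KR\,\|u\|_{L^2(L)}^4\,\|u'\|_{L^2(L)}^2$, dominate $\|u\|_{L^2(L)}^4$ by $\|u\|_2^4$, and sum over all lines. For the reverse bound $K_6\geq\KR$, I would take a mass-preserving rescaling $\varphi_\lambda$ of the $\R$-soliton from Remark~\ref{remmur}, multiply it by a smooth cutoff supported in a single edge of $\G$ and centered at its midpoint, and extend by zero; since $\varphi_\lambda$ concentrates at the midpoint as $\lambda\to\infty$, the cutoff becomes asymptotically invisible and $\QQ$ of the resulting function tends to $\KR$.

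Once $K_6=\KR$ and $\mu_6=\muR$ are in hand, \eqref{lowerb} yields $\EEsei(\mu)\geq 0$ for $\mu\leq\muR$; the matching upper bound $\EEsei(\mu)\leq 0$ is obtained by spreading the mass uniformly at level $c\sim\sqrt\mu/L$ over the subgraph of $\G$ contained in the square $[-L,L]^2$, with a linear taper to zero across a unit-width collar, so that $\|u_L'\|_2^2=O(\mu/L)$, $\|u_L\|_6^6=O(\mu^3/L^4)$, and $E_6(u_L)\to 0^+$ as $L\to\infty$. For $\mu>\muR$, a direct computation on $\R$ based on the soliton identity $\|\varphi\|_{L^6(\R)}^6=3\|\varphi'\|_{L^2(\R)}^2$ shows that the family $c_\lambda\varphi(\lambda(x-1/2))$ of mass $\mu$ satisfies $E_6^\R\to-\infty$ as $\lambda\to\infty$; cutting off and transplanting onto a single edge of $\G$ then yields $\EEsei(\mu)=-\infty$. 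Both cases agree with $\EEsei^\R(\mu)$ as recorded in \eqref{defEER}.

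The main obstacle is non-attainment. Assume $u\in\Hmu(\G)$ achieves $\EEsei(\mu)$ with $\mu\leq\muR$ (the case $\mu>\muR$ being trivial). The identity \eqref{identity} gives $E_6(u)=\tfrac12\|u'\|_2^2\bigl(1-\QQ(u)\mu^2/3\bigr)\geq 0$, and equality forces either $\|u'\|_2=0$, whence $u$ is locally constant and the $L^2$-constraint yields $u\equiv 0$, contradicting $\mu>0$; or $\QQ(u)\mu^2=3$, which combined with $\QQ(u)\leq\KR$ and $\muR^2\KR=3$ forces $\mu=\muR$ and $\QQ(u)=\KR$. It remains to exclude this last possibility: in the proof of $K_6\leq\KR$, the outer bound $\|u\|_{L^2(L)}^2\leq\|u\|_2^2$ is strict unless $u$ is supported on a single line $L_0$, in which case continuity at each vertex of $L_0$ forces $u|_{L_0}$ to vanish at every integer point; but equality in the one-dimensional Gagliardo--Nirenberg inequality on $L_0$ requires $u|_{L_0}$ to be (up to sign and translation) a rescaling of the $\R$-soliton, which is strictly positive everywhere. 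This incompatibility rules out attainment.
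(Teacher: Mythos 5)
Your proof is correct, but it reaches the two delicate points by a genuinely different route than the paper. For the hard bound $K_6\le\KR$ the paper does not argue line by line: it takes a nonnegative $u$, notes that every level $t\in(0,\|u\|_\infty)$ has at least two preimages, and passes to the symmetric decreasing rearrangement $\widehat u$ on $\R$ via Proposition~3.1 of \cite{ast2015}, which yields $K_6\le\KR$ \emph{and} the energy comparison $\EEsei^\R(\mu)\le\EEsei(\mu)$ in one stroke; you instead sum the sharp one-dimensional Gagliardo--Nirenberg inequality over all horizontal and vertical lines (legitimate, since $\|u\|_{L^6}^6$ and $\|u'\|_{L^2}^2$ are additive over the lines by \eqref{notationnorm} and each line is isometric to $\R$), and then pin down the value of $\EEsei(\mu)$ by separate test-function computations. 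The non-attainment rigidity also differs: the paper deduces from equality in the rearrangement step that $N(t)=2$ a.e., hence $u$ vanishes at every degree-$4$ vertex and is supported in a single edge, so $\widehat u$ is compactly supported and cannot be a soliton; you deduce from equality in your chain that $u$ must be supported on a single line and be a Gagliardo--Nirenberg extremizer there, which contradicts the forced vanishing at the integer vertices of that line. Both rigidity arguments land on the same tension between confinement to a one-dimensional piece of $\G$ and the everywhere-positivity of the soliton. Your route is more elementary and self-contained, its only external input being the classical characterization of the one-dimensional Gagliardo--Nirenberg extremizers as nowhere-vanishing solitons (which is implicitly behind Remark~\ref{remmur} anyway); the paper's rearrangement argument is less tied to the specific decomposition of $\G$ into lines isometric to $\R$ and so transfers more readily to other doubly periodic graphs.
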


\begin{proof} 
By a density argument,  the infimum in \eqref{gselR} 
can be restricted to functions $w\in \Hmu(\R)$ having compact support.
In fact, by a mass-preserving transformation
$w(x)\mapsto w( x/\eps^2)/\eps$, one can restrict
to functions supported in the interval $I=[-\frac 1 2,\frac 1 2]$. Then,
by intepreting this interval as one of the edges of the grid $\G$,
any function $w\in\Hmu (\R)$ supported in $I$ can be embedded
in $\Hmu(\G)$ by setting $w\equiv 0$ on $\G\setminus I$, thus providing an
admissible function in \eqref{defEEsei}. This proves that
$\EEsei(\mu)\leq \EEsei^\R(\mu)$ for every $\mu>0$. Similarly, starting from
the supremum in \eqref{miur}, by the
same argument one proves that $K_6\geq \KR$.

To prove the opposite inequalities we argue as follows.
Given a nonnegative function $u\in H^1(\G)$ ($u\not\equiv 0$), let $x_0\in\G$ be a point
where $u$ achieves   its absolute maximum $\Vert u\Vert_\infty$, and
let $P$ be any path in $\G$ such that
$x_0\in P$ and $P$ is isometric to the real line $\R$ (a natural choice for $P$ is
the horizontal/vertical line of $\G$ that contains $x_0$).
Since $u(x_0)=\Vert u\Vert_\infty$ and $u(x)\to 0$ as $x\to \pm\infty$ along $P$ (in both directions away from $x_0$),
the continuity of $u$ guarantees that $N(t)\geq 2$ for every $t\in (0,\Vert u\Vert_\infty)$, where
\begin{equation}
\label{defNt}
N(t)=\# \left\{x\in \G\,|\,\, u(x)=t\right\}
\end{equation}
counts the number of preimages in $\G$. Then, if $\widehat u\in H^1(\R)$ denotes
the symmetric rearrangement of $u$ on $\R$, applying Proposition~3.1 of~\cite{ast2015} we
obtain
\begin{equation}
\label{propsym}
\Vert (\widehat u)'\Vert_{L^2(\R)}\leq \Vert u'\Vert_{L^2(\G)},\qquad
\Vert \widehat u\Vert_{L^r(\R)}=\Vert u\Vert_{L^r(\G)}\quad\forall r
\end{equation}
so that, by the definition of $\KR$ in \eqref{miur}, we can estimate
\[
\|u\|_{L^6(\G)}^6=
\|\widehat u\|_{L^6(\R)}^6
\leq
\KR \|\widehat u\|_{L^2(\R)}^4 \|(\widehat u)'\|_{L^2(\R)}^2
\leq\KR \|  u\|_{L^2(\G)}^4 \| u'\|_{L^2(\G)}^2.
\]
Therefore, $K_6\leq \KR$ by \eqref{Kp}.  Similarly, for the NLS
energy we have
\begin{equation}
\label{ensym}
\frac 1 2\|(\widehat u)'\|_{L^2(\R)}^2-\frac 1 6 \|\widehat u\|_{L^6(\R)}^6
\leq
\frac 1 2\|u'\|_{L^2(\G)}^2-\frac 1 6 \| u\|_{L^6(\G)}^6
\end{equation}
and, since $\widehat u\in \Hmu(\R)$ whenever $u\in\Hmu(\G)$, this proves that
$\EEsei^\R(\mu)\leq \EEsei(\mu)$ for every $\mu>0$. Now assume that, for some $\mu$,
a function $u\in \Hmu(\G)$
achieves the infimum $\EEsei(\mu)$ in \eqref{defEEsei}.
Then, since  $\EEsei^\R(\mu)= \EEsei(\mu)$, \eqref{ensym}
 shows that, necessarily:
(i)\,
$\widehat u$ achieves the infimum $\EEsei^\R(\mu)$
in \eqref{gselR};
(ii)\, equality must occur in \eqref{ensym}, i.e. in \eqref{propsym}.
Now, condition (i)  entails that $\widehat u$ is a soliton
on $\R$ (necessarily of mass $\muR$),
while  (ii) implies (see Proposition~3.1 of \cite{ast2015}) that
$N(t)=2$ in \eqref{defNt}, i.e. that
$u^{-1}(t)$ has exactly $2$ elements for almost every $t\in (0,\|u\|_\infty)$:
then, since every vertex of $\G$ has degree $4$, $u$ must vanish at every vertex and is necessarily
supported in a single edge of $\G$. So $\widehat u$ has compact support too, which
is incompatible with $\widehat u$ being a soliton. This contradiction
shows the infimum in \eqref{defEEsei} is not achieved.

Finally, \eqref{critp} with $p=6$ yields $\mu_6=\sqrt{3/K_6}=\pi\sqrt{3}/2$, hence
$\mu_6=\muR$ by \eqref{critR}.
\end{proof}

\begin{proof}[Proof of Theorem \ref{crossover}]The case where $p=6$
has already been proved through Proposition~\ref{murmu6}.
The rest of the proof is divided into three parts.

\medskip

\noindent\emph{Computation of $\EE(\mu)$ when $p\in [4,6)$.}
First observe that, in the proof of Theorem \ref{subcritical},
no restriction on $p$ was used to construct $u_\eps$ and obtain \eqref{asymptotic},
which is therefore valid also when $p\geq 4$. As a consequence,
in this case, letting $\eps\to 0$
in \eqref{asymptotic} we obtain
\begin{equation} \label{zero}
\EE(\mu) 
\le 0 \qquad \forall p\geq 4,\quad\forall\mu>0.
\end{equation}
Moreover,  \eqref{lowerb} shows that $\EE(\mu)\geq 0$
when $\mu\leq\mu_p$.
This, combined with
\eqref{zero},
proves the first part of \eqref{inf46}, also when $p=4$.

Now fix a mass $\mu >\mup$ and a number $\eps>0$.
Since the quotient $\QQ(u)$ in \eqref{Kp} is unaltered if $u$ is
replaced with
$\lambda u$,
there exists $u\in \Hmu(\G)$ such that
\begin{equation}
\label{quasiopt}
\QQ(u)=\frac{\|u\|_p^p}{\mu^{\frac{p-2}2}\|u'\|_2^2}
\geq
K_p-\eps.
\end{equation}
Plugging this into \eqref{identity}, and then using \eqref{critp}, we can estimate
\[
E_p(u) \le  
\frac12 \|u'\|_2^2\left(1- \frac2p(K_p-\eps)\mu^{\frac{p-2}2}\right)
 = \frac12 \|u'\|_2^2 \left(1 - \left(\frac{\mu}{\mup}\right)^{\frac{p-2}2}+ \frac{2\eps}p \mu^{\frac{p-2}2}\right).
\]
Since $\mu>\mup$, this quantity is strictly negative if $\eps$ is small enough. Thus, for $\mu>\mup$, $\EE(\mu) <0$.
Moreover, when $p <6$, $\EE(\mu)>-\infty$ by Remark \eqref{remark inf finite}.
This proves the second part of \eqref{inf46}, also when $p=4$.

\medskip

\noindent\emph{Ground states when $p\in [4,6)$ and $\mu\not=\mu_p$.}  When $\mu >\mu_p$,
 \eqref{inf46} (valid also when $p=4$) shows that $\EE(\mu)$ is finite and negative,
 hence a ground state exists by Proposition \ref{att}.
 When $\mu <\mu_p$, $\EE(\mu) =0$ by \eqref{inf46},
  but \eqref{lowerb} reveals that
$E_p(u)>0$ for every $u\in \Hmu(\G)$. Therefore, no ground state exists
in this case.

\medskip

\noindent\emph{Ground states when $p\in (4,6)$ and $\mu=\mu_p$.}
Since by \eqref{inf46} $\EE(\mu_p)=0$, we can no longer rely
on Proposition~\ref{att}, and another argument is needed to show
 that $\EE(\mup)$ is in fact achieved.

Arguing as for \eqref{quasiopt},
let $u_n\in H^1_{\mup}(\G)$ be a
sequence of functions such that
\begin{equation}
\label{maxgn}
\lim_n Q_p(u_n)=
\lim_n \frac{\|u_n\|_p^p}{\mu_p^{\frac{p-2}2}\|u_n'\|_2^2} = K_p.
\end{equation}
We shall  bound $Q_p(u_n)$ in two different ways. First,
from the the Gagliardo-Nirenberg inequality \eqref{gn1} we obtain

\begin{equation*}
Q_p(u_n)\leq
\frac{\|u_n\|_2^{\frac{p}2+1} \|u_n'\|_2^{\frac{p}2-1}}{\mu_p^{\frac{p-2}2}\|u_n'\|_2^2}
=\frac{\mu_p^{\frac{6-p}4}}{\|u_n'\|_2^\frac{6-p}2}.
\end{equation*}
Secondly, interpolating and then using \eqref{gn3} with $p=4$, we obtain
\begin{equation*}
Q_p(u_n)\leq
\frac{\|u_n\|_\infty^{p-4}\|u_n\|_4^4}{\mu_p^{\frac{p-2}2}\|u_n'\|_2^2}
\leq \|u_n\|_\infty^{p-4} \frac{{K_4\|u_n\|_2^2\,\|u_n'\|_2^2}}{\mu_p^{\frac{p-2}2}\|u_n'\|_2^2}
=\|u_n\|_\infty^{p-4} \frac{K_4}{\mu_p^{\frac{p-4}2}}.
\end{equation*}
Recalling \eqref{maxgn}, from these two bounds
we infer that
$\|u_n'\|_2\leq C$ (compactness)
and  $\|u_n\|_\infty\geq C^{-1}$ (non-degeneracy), for some constant $C>0$ independent of $n$.
Thus $\{u_n\}$ is bounded in $H^1(\G)$ and,
 up to translations, we can also assume that each $u_n$ achieves its $L^\infty$ norm on some
  compact set  $\mathcal{K}\subset \G$ independent of $n$.
Then, up to subsequences,
$u_n\rightharpoonup u$ in $H^1(\G)$
for some $u\in H^1(\G)$,
and $u_n \to u$ in $L^\infty_{\text{loc}}(\G)$: in particular,
$u_n\to u$ uniformly on $\mathcal K$ and,
since $\Vert u_n\Vert_{L^\infty(\mathcal K)}>C^{-1}$,
$u$ is not identically zero.

Finally, writing \eqref{identity} with $u=u_n$ and $\mu=\mu_p$, since $\Vert u_n'\Vert_2\leq C$
we find
\[
\left| E_p(u_n)\right|
\leq \frac {C^2}2 \left|
1-\frac 2 p Q_p(u_n)\mu_p^{\frac{p-2}2}
\right|
=
\frac {C^2}2 \left|
1-\frac {Q_p(u_n)}{K_p}
\right|
\]
having used \eqref{critp}. Therefore,
$E_p(u_n)\to 0$  by \eqref{maxgn} and,
since $\EE(\mup) = 0$,
$u_n$ is a minimizing sequence for $E_p$, so that
Lemma \ref{propgen} applies: since we already know that
$u$ is not identically zero, we obtain that  $\|u\|_2^2 = \mup$, i.e.
$u\in H^1_{\mu_p}(\G)$.
But then $u$ is the required minimizer: indeed,
$u_n\to u$ strongly in $L^2(\G)$  hence also in $L^p(\G)$, and
by weak lower semicontinuity we obtain
\[
E_p(u) \le \liminf_n E_p(u_n) = \EE(\mup).
\]
\end{proof}

\end{document}